\let\phi\varphi
\newcommand\tab[1][.5cm]{\hspace*{#1}}
\newtheorem{theorem}{Theorem}[section]
\newtheorem{corollary}[theorem]{Corollary}
\newtheorem{lemma}[theorem]{Lemma}
\newtheorem{remark}[theorem]{Remark}
\newtheorem{conjecture}[theorem]{Conjecture}
\newcommand*{\bbb}[1]{{\mathbb{#1}}}
\newcommand{\C}{\bbb{C}}
\newcommand{\N}{\bbb{N}}
\newcommand{\Q}{\bbb{Q}}
\newcommand{\R}{\bbb{R}}
\newcommand{\Z}{\bbb{Z}}
\newcommand{\Gal}{\text{Gal}}
\DeclareMathOperator{\Nrm}{Nrm}
\newcommand{\Mod}[1]{\ (\mathrm{mod}\ #1)}
\newcommand{\p}{\mathfrak{p}}
\newcommand{\vep}{\varepsilon}
\newcommand{\mO}{\mathcal{O}}
\newcommand{\bp}{\bar{\mathfrak{p}}}
\title{On the Non $p$-Rationality and Iwasawa Invariants of Certain Real Quadratic Fields}
\author{Peikai Qi}
\email{qipeikai@msu.edu}
\address{Michigan State University, East Lansing, Michigan, USA}
\date{\today}
\author{Matt Stokes}
\email{mathewsonstokes@gmail.com}
\address{Randolph College, Lynchburg, Virginia, USA}
\thanks{Thanks to Jie Yang and Preston Wake for reading the draft of the paper}
\begin{document}

\maketitle

\begin{abstract}
Let $p$ be an odd prime, and $m,r \in \Z^+$ with $m$ coprime to $p$.  In this paper we investigate the real quadratic fields $K = \Q(\sqrt{m^2p^{2r} + 1})$.  We first show that for $m < C$, where constant $C$ depends on $p$, the fundamental unit $\vep$ of $K$ satisfies the congruence $\vep^{p-1} \equiv 1 \Mod{p^2}$, which implies that $K$ is a non $p$-rational field.  Varying $r$ then gives an infinite family of non $p$-rational fields.  When $m = 1$ and $p$ is a non-Wieferich prime, we use a criterion of Fukuda and Komatsu \cite{MR0816225} to show that if $p$ does not divide the class number of $K$, then the Iwasawa invariants for cyclotomic $\Z_p$-extension of $K$ vanish.  We conjecture that there are infinitely many $r$ such that $p$ does not divide the class number of $K$.
\end{abstract}

\section{Introduction}

\subsection{$p$-Rational Fields}

Let $F$ be a number field, and $p$ a prime number.  Let $S$ be the set of primes of $F$ above $p$ and let $L$ be the maximal pro-$p$ abelian extension of $F$ unramified outside of $S$. Then 
$ \Gal(L/F)\cong \Z_p^\rho \times \mathcal{T}_F$, where $\rho$ is a positive integer and $\mathcal{T}_F$ is the torsion subgroup of $\Gal(L/F)$ (Leopoldt's conjecture predicts that $\rho=r_2+1$, where $2r_2$ is the number of complex embeddings $F \hookrightarrow \C$). We say $F$ is $p$-rational if $\mathcal{T}_F$ vanishes. The concept of $p$-rationality has its origins in ramification theory, \cite{MR1124802} \cite{nguyen1990arithmetique}\cite{AIF_1986__36_2_27_0}.

For fixed $p$, the general expectation is that among all real quadratic fields, the $p$-rational ones greatly outnumber the non $p$-rational ones (see Section 4.2.3 of \cite{gras2019heuristics}). 
 For instance, Byeon \cite[Theorem 1.1]{Byeon_2001} shows that for $p > 3$
 \[
 \#\left\{ \text{$K$ is real quadratic} \, | \, 0 < D_K < X \text{, and $K$ is $p$-rational}  \right\} \gg \frac{\sqrt{X}}{\log(X)}
 \]
 (see also Ono \cite{Ono_1999}).  It is proved that there are infinitely many $p$-rational totally real fields for a given $p$ \cite{MR0961918}, under the hypothesis of the $abc$-conjecture.  On the other hand, it is expected that for a fixed real quadratic field $K$, the primes $p$ such that $K$ is non $p$-rational are rare (or possibly finite \cite{Gras_2016}).  For a fixed real quadratic field $K$, it is generally difficult to find a particular $p$ such that $K$ is non $p$-rational. However, if we fix $p$, it is easy to construct an infinite family of non $p$-rational real quadratic fields \cite{GRAS_2023},\cite{gras2019heuristics}. Our Theorem \ref{main1} generalizes \cite{gras2019heuristics} and might overlap with \cite{GRAS_2023}. One can also construct a family of real quadratic fields by varying $p$. For example, $\Q(\sqrt{p(p+2)})$ is $p$-rational for odd primes $p$ (see \cite{gras2019heuristics}\cite{benmerieme2021corps} for this kind of result). 

Let $|\cdot|_{\infty}: \R \to \R^+$ be the usual absolute value. 
\begin{theorem}
Let $r\geq 2,m$ be positive integers, $p$ be an odd prime, and $\gcd(p,m)=1$. Denote $K=\Q(\sqrt{m^2p^{2r}+1})$, and assume that \[
    |m|_{\infty}\leq \left|\frac{(1+\binom{p^{r-1}}{2})p^{p^{r-1}-r}}{2^{p^{r-1}}} \right|_{\infty}
    \]
 Then  $K$ is a non $p$-rational field.
\end{theorem}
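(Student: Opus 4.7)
The plan is to exhibit an explicit unit $u\in\mO_K$ whose $(p-1)$-st power is $\equiv 1\Mod{p^2}$, and then transfer this congruence to the fundamental unit $\vep$ of $K$.  Since $D:=m^2p^{2r}+1\equiv 1\Mod{p}$, the prime $p$ splits in $K$, and for real quadratic fields with $p$ split the congruence $\vep^{p-1}\equiv 1\Mod{p^2}$ is a sufficient condition for non $p$-rationality; it is this congruence we aim to establish.

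The candidate unit is $u:=mp^r+\sqrt D$, which has norm $(mp^r)^2-D=-1$ and therefore lies in $\mO_K^\times$.  Writing $u^{p-1}=A+B\sqrt D$ by the binomial theorem and separating the even and odd powers of $\sqrt D$, every term of $A$ apart from the leading term $D^{(p-1)/2}$ carries a factor of at least $p^{2r}$, and every term of $B$ carries a factor of at least $p^r$.  Combined with $D^{(p-1)/2}\equiv 1\Mod{p^{2r}}$ (since $D\equiv 1\Mod{p^{2r}}$) and the hypothesis $r\ge 2$, this gives $A\equiv 1\Mod{p^2}$ and $B\equiv 0\Mod{p^2}$, hence $u^{p-1}\equiv 1\Mod{p^2\mO_K}$.

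To transfer the congruence to $\vep$, write $u=\pm\vep^k$ for some positive integer $k$.  Fixing an embedding $K\hookrightarrow\Q_p$ in which $\sqrt D\equiv 1\Mod{p^{2r}}$, the same binomial calculation gives $u\equiv 1+mp^r\Mod{p^{2r}}$, hence $\log_p u\equiv mp^r\Mod{p^{2r+1}}$ and $v_p(\log_p u)=r$ (using $\gcd(m,p)=1$).  From $\log_p u=k\log_p\vep$ one obtains $v_p(\log_p\vep)=r-v_p(k)$.  Now $\vep^{p-1}\equiv 1\Mod{p^2}$ is equivalent to $v_p(\log_p\vep)\ge 2$, i.e., to $p^{r-1}\nmid k$, and it suffices to prove the stronger bound $k<p^{r-1}$, equivalently $u<\vep^{p^{r-1}}$.

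The principal obstacle is this final inequality, and it is where the hypothesis on $|m|$ enters.  The upper bound $u\le 2mp^r+1$ is immediate, but the only general lower bound $\vep\ge(1+\sqrt{d_K})/2$ is weak when $D$ has a large square factor and $d_K$ is much smaller than $D$.  Overcoming this requires a direct analysis of the putative identity $u=\vep^{p^{r-1}}$: a candidate $\vep=(T+\sqrt{T^2-4N})/2\in\mO_K^\times$ with $N=\pm1$ forces the pair of identities $L_{p^{r-1}}(T,N)=2mp^r$ and $F_{p^{r-1}}(T,N)^2(T^2-4N)=4D$, where $L_n,F_n$ are the Lucas and Fibonacci-type sequences attached to $\vep$.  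The explicit size bound on $|m|$---whose factor $1+\binom{p^{r-1}}{2}$ corresponds to the top two coefficients of $L_{p^{r-1}}(T,N)$ as a polynomial in $T$, whose factor $2^{p^{r-1}}$ comes from the denominator in the formula for $\vep^{p^{r-1}}$, and whose factor $p^{p^{r-1}-r}$ tracks the required $p^r$-divisibility of $L_{p^{r-1}}$---is exactly what is needed to exclude all integer pairs $(T,N)$ compatible with $K$.  Once $k<p^{r-1}$ is established, the chain of implications above gives $\vep^{p-1}\equiv 1\Mod{p^2}$, and non $p$-rationality of $K$ follows.
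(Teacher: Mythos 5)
Your first two steps are sound and run parallel to the paper's: the unit $u=mp^r+\sqrt{D}$ of norm $-1$ and your binomial computation showing $u^{p-1}\equiv 1\Mod{p^2}$ are a legitimate substitute for the paper's generalized-Fibonacci argument (Lemma \ref{Fib to Units}), and the criterion ``$p$ split plus $\vep^{p-1}\equiv 1\Mod{p^2}$ implies non $p$-rationality'' is exactly what the paper extracts from Coates' formula. The gap is in the transfer step, which is the only place the hypothesis on $|m|_{\infty}$ enters, and you do not carry it out: the final paragraph asserts that the bound on $|m|_{\infty}$ ``is exactly what is needed to exclude all integer pairs $(T,N)$'' without any argument, and analyzing the single identity $u=\vep^{p^{r-1}}$ would in any case not rule out all $k\geq p^{r-1}$. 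Worse, the statement you reduce to, $k<p^{r-1}$, is strictly stronger than what you actually need ($v_p(k)\leq r-2$) and does not follow from the hypothesis: when $m^2p^{2r}+1$ has a large square factor the fundamental unit of $K$ can be small (as small as $1+\sqrt{2}$), so $k\approx\log u/\log\vep$ can exceed $p^{r-1}$ while being prime to $p$ --- a situation in which the theorem is still true but your intermediate claim is false. The crude bounds you cite ($u\leq 2mp^r+1$ versus $\vep^{p^{r-1}}$) cannot close this, and no refinement of them can, because the inequality you are after is not a consequence of the hypotheses.

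The paper's route avoids bounding $k$ altogether. Write $u=\pm\vep^{p^{l}k'}$ with $\gcd(k',p)=1$ and set $x+y\sqrt{D}=\vep^{k'}$. Expanding $(x+y\sqrt{D})^{p^{l}}$ and comparing rational parts gives $p\mid x$ unconditionally (equivalently, $v_p(k)\leq r-1$ in your logarithmic bookkeeping); the size hypothesis on $|m|_{\infty}$ is used only to kill the single borderline case $v_p(x)=1$, which forces $l=r-1$ and $|mp^r|_{\infty}>|(1+\binom{p^{l}}{2})x^{p^{l}}|_{\infty}$ with $|x|_{\infty}\geq p/2$, contradicting the assumed bound. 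Once $p^2\mid x$, the unit $\vep^{k'}$ has norm $-1$ and Lemma \ref{Fib to Units} gives $(\vep^{k'})^{p-1}\equiv 1\Mod{p^2}$, whence $\vep^{p-1}\equiv 1\Mod{p^2}$ by Lemma \ref{prime to p} since $\gcd(k',p)=1$ --- no control on the size of $k'$ is ever required. To repair your argument you should replace the target $k<p^{r-1}$ by $v_p(k)\leq r-2$ and supply an actual proof of that statement; the paper's expansion of $\vep^{p^{l}k'}$ is one way to do it.
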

This is Theorem \ref{main1} in Section \ref{prat}. We also prove that, for a fixed $p$, this gives an infinite family of non $p$-rational real quadratic fields as $r \to \infty$.  These fields were studied by Gras \cite[Section 4.3]{gras2019heuristics} under the assumption that $m^2p^{2r}+1$ is squarefree. We do not make this assumption.

\subsection{Iwasawa Invariants and Greenberg's Conjecture}

Let $p$ be a prime, $K$ a number field, and let $\zeta_{p^n}$ be a primitive $p^n$-th root of unity. Then $\Q(\zeta_{p^{n+1}})/\Q$ has a unique degree $p^n$ sub-extension of $\Q$ denoted as $\Q_n$. Put $\Q_\infty=\bigcup_n \Q_n$ and $K_\infty=K\Q_\infty$. Then $K_\infty/K$ is a $\Z_p$-extension of $K$ which we call the cyclotomic $\Z_p$-extension of $K$. We write $K_n$ to be the $n$-th layer of the $\Z_p$-extension of $K_\infty/K$.

Let $A_n$ be the $p$ primary part of the class group of $K_n$. Then Iwasawa proved \cite{IW} that there are constants $\mu, \lambda,\nu \in \Z$ for all sufficiently large $n \in \Z^+$ with
\[
|A_n| = p^{\mu p^n + \lambda n + \nu}.
\]
  The constants $\lambda$, $\mu$, and $\nu$ are called the Iwasawa invariants for $K_{\infty}/K$. We may also write them as $\lambda_p(K_{\infty}/K)$, $\mu_p(K_{\infty}/K)$, and $\nu_p(K_{\infty}/K)$ when $p, K$ and $K_\infty$ are not clear from context.

When $K/\Q$ is abelian, Ferrero and Washington \cite{FW} showed that $\mu = 0$ for the cyclotomic $\Z_p$-extension of $K$, and Iwasawa conjectured that this should hold for any number field.  On the other hand, much less is known about the $\lambda$ invariant for the cyclotomic $\Z_p$-extension of $K$. In fact, the cyclotomic $\mu$-invariant and $\lambda$-invariant are thought to be zero for any totally real number field.  This is known as Greenberg's conjecture \cite{MR0401702}.  

To find evidence for Greenberg's conjecture, one may try to construct an infinite family of real quadratic fields $K$ where $\lambda_p  = 0$, for a fixed prime $p$.  By a well-known theorem of Iwasawa \cite[Prop 13.22]{MR1421575}, if $p$ is inert in $K/\Q$, totally ramified in $K_{\infty}/K$, and doesn't divide the class number $h_K$ of $K$, then $\lambda = 0$.  Therefore, one may try to find infinitely many fields such that $p$ doesn't split and $p \nmid h_K$.  This has been done by Ono \cite{Ono_1999} for $3< p< 5000$, Ozaki and Taya \cite{Ozaki1997OnTI} and Fukuda and Komatsu \cite{FK3} when $p = 2$, as well as Horie and Nakagawa \cite{MR0958035} when $p = 3$.  These authors also studied the case when $p$ splits, for $p = 2$ and $p = 3$ respectively.  Kraft \cite{KR} has found a relation between the $\lambda$-invariants of certain imaginary and real quadratic fields when $p = 3$, which gives an infinite family of real quadratic fields with vanishing $\lambda$-invariant by Corollary 1.3 of Itoh \cite{IT}. 

Recall that a prime $p$ is called Wieferich prime if $p^2\mid (2^{p-1}-1)$.
\begin{theorem}\label{main 3}
  Let $p \geq 3$ be a non-Wieferich prime and $r\geq 2$, and write $K=\Q(\sqrt{p^{2r}+1})$.    Assume that $p$ doesn't divide the class number of $K$. Then the Iwasawa invariants $\mu$ and $\lambda$ 
 are zero for the cyclotomic $\Z_p$-extension of $K$. 
\end{theorem}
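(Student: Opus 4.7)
The Ferrero–Washington theorem immediately gives $\mu_p=0$ because $K/\Q$ is abelian, so it remains to show $\lambda_p=0$. The plan is to feed the unit congruence from Theorem~\ref{main1} into the Fukuda–Komatsu criterion of \cite{MR0816225}.

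I would first record the basic structure. Since $p^{2r}+1\equiv 1\pmod p$ is a nonzero square, $p$ splits in $K$; write $p\mathcal{O}_K=\mathfrak{p}\bar{\mathfrak{p}}$, and both primes are totally ramified in the cyclotomic $\Z_p$-extension $K_\infty/K$. The continued fraction expansion $\sqrt{p^{2r}+1}=[p^r;\overline{2p^r}]$ has period one, so $\varepsilon:=p^r+\sqrt{p^{2r}+1}$ (of norm $-1$) is the fundamental unit. An elementary check that the inequality of Theorem~\ref{main1} holds for $m=1$, $r\geq 2$, $p\geq 3$ then yields $\varepsilon^{p-1}\equiv 1\pmod{p^2}$.

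Next I would carry out the $p$-adic computation that feeds into Fukuda–Komatsu. Embedding $K\hookrightarrow K_\mathfrak{p}\cong\Q_p$ so that $\sqrt{1+p^{2r}}=1+\tfrac{1}{2}p^{2r}+O(p^{4r})$, one expands $\varepsilon|_\mathfrak{p}=1+p^r+\tfrac{1}{2}p^{2r}+O(p^{4r})$, and a direct application of the $p$-adic logarithm gives $\log_p(\varepsilon|_\mathfrak{p})=p^r+O(p^{3r})$, so $v_\mathfrak{p}(\varepsilon^{p-1}-1)=r$ exactly; the analogous statement at $\bar{\mathfrak{p}}$ follows from $\varepsilon|_{\bar{\mathfrak{p}}}=-\varepsilon|_\mathfrak{p}^{-1}$, which is in turn forced by $\varepsilon\cdot\sigma(\varepsilon)=N(\varepsilon)=-1$. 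More generally one obtains $v_\mathfrak{p}(\varepsilon^{p^n}-1)=r+n$ for every $n\geq 0$. Combined with the hypothesis $p\nmid h_K$, the Fukuda–Komatsu criterion reads off $\lambda_p=0$ from exactly this growth rate.

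The non-Wieferich hypothesis is the subtlest ingredient and is where I expect the main technical difficulty. I anticipate it entering through the auxiliary cyclotomic units contributed by the real cyclotomic tower $\Q_n\subset K_n$: the condition $2^{p-1}\not\equiv 1\pmod{p^2}$ is equivalent to $v_p(\log_p 2)=1$, the minimal possible value, which prevents cyclotomic units such as $2-\zeta_{p^{n+1}}-\zeta_{p^{n+1}}^{-1}$ from conspiring with $\varepsilon$ to produce extra $p$-torsion in the class groups $A_n$ as one ascends the tower. Although Theorem~\ref{main1} tells us that $K$ is non-$p$-rational (so $\lambda_p>0$ is a priori possible), the combination of the explicit valuations above with the non-Wieferich condition should force the Iwasawa module $X=\varprojlim A_n$ to vanish, yielding $\lambda_p=0$. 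The principal obstacle is thus matching the computed valuations precisely with the formal input of \cite{MR0816225}, and pinpointing which $2$-related cyclotomic unit at level $K_n$ encodes the non-Wieferich hypothesis and blocks the class group from growing.
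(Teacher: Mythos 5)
Your setup is on the right track (splitting of $p$, identification of the fundamental unit, the valuation $v_{\mathfrak{p}}(\varepsilon^{p-1}-1)=r$, which is the paper's $n_2=r$), but there is a genuine gap: you never verify the first hypothesis of the Fukuda--Komatsu criterion, namely $n_1=1$, where $n_1$ is defined by $\alpha^{p-1}\equiv 1\Mod{\mathfrak{p}^{n_1}}$ and $\alpha^{p-1}\not\equiv 1\Mod{\mathfrak{p}^{n_1+1}}$ for $\alpha$ a generator of $\bar{\mathfrak{p}}^{h_K}$. The criterion needs both $n_1=1$ and $A_0=D_0$; the hypothesis $p\nmid h_K$ only gives the second. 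Your computation concerns the unit $\varepsilon$ alone and says nothing about the generator of $\bar{\mathfrak{p}}^{h_K}$, and your guess that the non-Wieferich hypothesis enters through cyclotomic units like $2-\zeta_{p^{n+1}}-\zeta_{p^{n+1}}^{-1}$ higher up the tower is not how the argument goes. The actual mechanism is elementary and entirely at the base level: from $p^{2r}=(b\sqrt{D}+1)(b\sqrt{D}-1)$ one may take $\bar{\mathfrak{p}}^{2r}=(b\sqrt{D}+1)$ and $\mathfrak{p}^{2r}=(b\sqrt{D}-1)$, so $b\sqrt{D}\equiv 1\Mod{\mathfrak{p}^{2r}}$ and hence $(b\sqrt{D}+1)^{p-1}\equiv 2^{p-1}\Mod{\mathfrak{p}^{2}}$. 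The non-Wieferich hypothesis says exactly that $2^{p-1}\not\equiv 1\Mod{p^2}$, so the generator of $\bar{\mathfrak{p}}^{2r}$ violates the congruence mod $\mathfrak{p}^2$; writing $\bar{\mathfrak{p}}^{s}=(t)$ with $s$ the order of $\bar{\mathfrak{p}}$ in the class group, comparing $b\sqrt{D}+1=\pm t^{2r/s}\varepsilon^{k_1}$ with $\varepsilon^{p-1}\equiv 1\Mod{\mathfrak{p}^2}$ forces $t^{p-1}\not\equiv 1\Mod{\mathfrak{p}^2}$, and then $\alpha=\pm t^{h_K/s}\varepsilon^{k_2}$ together with $\gcd(p,h_K)=1$ gives $\alpha^{p-1}\not\equiv 1\Mod{\mathfrak{p}^2}$, i.e.\ $n_1=1$. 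Without this step your argument does not close, since Theorem~\ref{main1} only shows $n_2\geq 2$ and by itself yields non-$p$-rationality, not the vanishing of $\lambda$.

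Two smaller points. First, your continued-fraction identification of the fundamental unit tacitly assumes $p^{2r}+1$ is squarefree; in general $p^{2r}+1=b^2D$ and one must show $p^{r}+b\sqrt{D}$ is fundamental (the paper's Lemma~\ref{fundamental units}, which excludes finitely many cases with $D=2$ via a result on perfect powers in linear recurrences). Second, the conclusion of the criterion is that $|A_n|$ is eventually constant, which gives $\mu=\lambda=0$ simultaneously; invoking Ferrero--Washington for $\mu$ is fine but not needed.
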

This is Theorem \ref{main 3} in Section \ref{iwasawa inv}. We also prove that the fields $\Q(\sqrt{p^{2r}+1})$ are distinct for distinct values of $r$, except for finitely many cases where $\Q(\sqrt{p^{2r}+1})=\Q(\sqrt{2})$. We expect that, for a fixed $p$, there are infinitely many $r$ such that $p$ doesn't divide the class number of  $ \Q(\sqrt{p^{2r}+1})$, and hence an infinite family satisfying Greenberg's conjecture. We don't know how to prove this, but there may be another approach, as we now explain. 

We applied a numerical criterion developed by Fukuda and Komatsu \cite{MR0816225} to conclude $K=\Q(\sqrt{p^{2r}+1})$ satisfying Greenberg's conjecture in the theorem.  The numerical criterion \cite{MR0816225} is in terms of invariants $n_1$, $n_2$. (See the definition in Section \ref{iwasawa inv}). Roughly speaking, we have $n_1=1$ and $n_2=r$ for $K=\Q(\sqrt{p^{2r}+1})$. It is possible to apply different kinds of numerical criterion in terms of the invariants $n_1$, $n_2$ developed by Fukuda, Komatsu, and other authors \cite{FK}\cite{FK2}\cite{Fukuda1995TheI}. In this way, one may replace the condition on the class number with another condition (see part (b) of Theorem \ref{fukuda and komatsu}). It may be easier to prove that $\Q(\sqrt{p^{2r}+1})$ satisfies this new condition for infinitely many $r$, but the authors were unable to do it. 

\subsection{On $p,r$ such that $\Q(\sqrt{p^{2r}+1})=\Q(\sqrt{2})$}
 We prove that, for a fixed $p$,  the fields $\Q(\sqrt{p^{2r}+1})$ are distinct except for finitely many cases where $\Q(\sqrt{p^{2r}+1})=\Q(\sqrt{2})$. We expect that the case $\Q(\sqrt{p^{2r}+1})=\Q(\sqrt{2})$ should never happen and show that such exceptions are related to divisibility properties of the sequence $\{G_n\}$ defined by $(1+\sqrt{2})^n=G_n+F_n\sqrt{2}$. We prove the following result in the appendix, which may be of independent interest. 

Let $\nu_2$ be the $2$-adic valuation.
\begin{theorem}
       Let $l,m\in \N$.

\[       G_{gcd(l,m)}=\begin{cases}
           gcd(G_l,G_{m}) & \nu_2(l)=\nu_2(m)\\
           1& \nu_2(l)\neq\nu_2(m)
       \end{cases}\]

\end{theorem}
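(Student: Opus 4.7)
The plan is to adapt the classical Lucas-sequence gcd machinery to the companion Pell sequence $G_n$. Setting $\alpha=1+\sqrt{2}$ and $\beta=1-\sqrt{2}$ so that $2G_n = \alpha^n+\beta^n$ and $2\sqrt{2}\,F_n = \alpha^n-\beta^n$, one reads off the addition laws
\[
G_{m+n} = G_m G_n + 2F_m F_n, \qquad F_{m+n} = G_m F_n + F_m G_n,
\]
and, from $\alpha\beta=-1$, the Pell identity $G_n^2 - 2F_n^2 = (-1)^n$. Two consequences I use throughout: $\gcd(G_n,F_n) = 1$, and each $G_n$ is odd (by induction on $G_{n+2}=2G_{n+1}+G_n$), so $\gcd(G_l,G_m)$ is automatically odd.

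The heart of the argument is a rank-of-apparition statement. For an odd prime $p$ that divides some $G_n$, let $r=r_p := \min\{n\geq 1 : p\mid G_n\}$. Reducing the addition laws modulo $p$ using $p\mid G_r$ gives $G_{n+r}\equiv 2F_r F_n$ and $F_{n+r}\equiv F_r G_n\pmod{p}$, hence $G_{n+2r}\equiv 2F_r^2\,G_n\pmod{p}$; since the Pell identity forces $p\nmid F_r$, the zero locus $\{n : p\mid G_n\}$ is closed under $n\mapsto n+2r$, and by minimality of $r$ it equals exactly $\{(2k+1)r_p : k\geq 0\}$. Consequently, if an odd prime $p$ divides $\gcd(G_l,G_m)$, then $r_p$ divides both $l$ and $m$ with odd quotients, forcing $\nu_2(l) = \nu_2(r_p) = \nu_2(m)$.

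When $\nu_2(l)\neq\nu_2(m)$ this already yields the claim: no odd prime divides $\gcd(G_l,G_m)$, so since this gcd is an odd divisor of $G_l$, it equals $1$. When $\nu_2(l)=\nu_2(m)$, set $g=\gcd(l,m)$ and write $l=gl'$, $m=gm'$ with $l',m'$ coprime positive odd integers. The factorization $\alpha^{l'g}+\beta^{l'g} = (\alpha^g+\beta^g)\,\Phi_l$ with $\Phi_l = \sum_{i=0}^{l'-1}(-1)^i\alpha^{(l'-1-i)g}\beta^{ig}$ (a rational integer by the $\alpha\leftrightarrow\beta$ symmetry, using that $l'$ is odd) writes $G_l = G_g\Phi_l$ and similarly $G_m = G_g\Phi_m$, reducing the problem to $\gcd(\Phi_l,\Phi_m)=1$. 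Any prime $p\mid\gcd(\Phi_l,\Phi_m)$ falls into one of two cases: (a) if $p\mid G_g$, then $\beta^g\equiv-\alpha^g\pmod{p}$ collapses $\Phi_l\equiv l'\alpha^{(l'-1)g}$ and $\Phi_m\equiv m'\alpha^{(m'-1)g}\pmod{p}$, forcing $p\mid\gcd(l',m')=1$, a contradiction; (b) if $p\nmid G_g$, then $p\mid G_l=G_g\Phi_l$ and $p\mid G_m=G_g\Phi_m$, so by the rank result $r_p$ divides both $l$ and $m$ with odd quotients, hence $r_p\mid g$ with $g/r_p$ odd, forcing $G_{r_p}\mid G_g$ and $p\mid G_g$, again a contradiction. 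Therefore $\gcd(G_l,G_m) = G_g = G_{\gcd(l,m)}$.

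I expect the subtlest step to be the reduction $\Phi_l\equiv l'\alpha^{(l'-1)g}\pmod{p}$ in case (a) and the verification that $\Phi_l$ lies in $\Z$ via the Galois symmetry; both are short but easy to get wrong. The rest — the rank-of-apparition computation and the case split on $\nu_2$ — is routine once the addition laws and the Pell identity are in place.
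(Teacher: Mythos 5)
Your route is genuinely different from the paper's. The paper proves the single identity $G_{l+m}=2G_mG_l-(-1)^mG_{l-m}$ and runs a Euclidean-type descent on the index pair, using $\gcd(G_l,G_r)=\gcd(G_{l-2r},G_r)$ until the pair stabilizes at either $(d,d)$ (same $2$-adic valuation) or $(d,0)$ (different valuations, where $G_0=1$). You instead invoke the classical companion-Lucas-sequence machinery: rank of apparition for $G_n$, plus the cyclotomic-style factorization $G_l=G_g\Phi_l$ for odd $l/g$. Both work; the paper's argument is shorter and entirely self-contained, while yours explains \emph{why} the $2$-adic condition appears (it is exactly the condition $\nu_2(l)=\nu_2(r_p)=\nu_2(m)$ forced by the zero locus being the odd multiples of $r_p$) and gives the finer local information about which primes can divide $\gcd(G_l,G_m)$.

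There is, however, one genuine gap: your justification of the rank-of-apparition statement. Closure of $\{n: p\mid G_n\}$ under $n\mapsto n\pm 2r_p$ together with minimality of $r_p$ shows that this set is a union of residue classes mod $2r_p$, none of which has a representative in $[1,r_p)$, and that the class of $r_p$ is present; it does \emph{not} exclude a further class with representative $s\in(r_p,2r_p)$, so it does not yet yield ``equals exactly $\{(2k+1)r_p\}$.'' This is the linchpin of both your case (b) and your treatment of $\nu_2(l)\neq\nu_2(m)$, so it must be closed. The patch is short with the tools you already have: if $p\mid G_s$ with $s=r_p+j$, $0<j<r_p$, then $G_s\equiv 2F_{r_p}F_j\pmod p$ and $p\nmid F_{r_p}$ force $p\mid F_j$; then $G_{r_p}=G_{(r_p-j)+j}=G_{r_p-j}G_j+2F_{r_p-j}F_j\equiv G_{r_p-j}G_j\pmod p$, so $p$ divides $G_j$ or $G_{r_p-j}$, and both indices lie in $(0,r_p)$, contradicting minimality. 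With that line inserted, the rest of your argument (the oddness of $G_n$, the integrality of $\Phi_l$ via the $\alpha\leftrightarrow\beta$ symmetry, the collapse $\Phi_l\equiv l'\alpha^{(l'-1)g}$ when $p\mid G_g$, and the two-case analysis for $\gcd(\Phi_l,\Phi_m)=1$) checks out.
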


The theorem is Theorem \ref{divisibility} in Appendix. It is worth pointing out that in \cite{mcconnell2024newinfinitefamiliesnonprational}, McConnell constructs an infinite family of non $p$-rational real quadratic fields based on the sequence $d_l(D)$. The sequence $d_l(D)$ also has ``strong divisibility" if the indecies have the same $3$-adic valuation. There may be a connection between these two sequences. 

\subsection{Structure of the Paper}
In Section \ref{general property}, we study some general properties of the field $K=\Q(\sqrt{m^2p^{2r}+1})$. In Section \ref{prat}, we study the $p$-rationality of $K$. In Section \ref{iwasawa inv}, we study Greenberg's conjecture for $K$. In the Appendix, we study properties of the sequence $G_n$. 

\section{Construction of a Family of Real Quadratic Fields}\label{general property}
In this section, we first study the basic properties of the real quadratic field $\Q(\sqrt{m^2p^{2r}+1})$.  Here $m$, $r$ are positive integers and $p$ is an odd prime which is prime to $m$. Let $\varepsilon$ be the fundamental unit of $\Q(\sqrt{m^2p^{2r}+1})$. Our focus throughout this paper will be on the following congruence
\[
\varepsilon^{p-1}\equiv1\Mod{p^2}
\]
which may or may not hold for a given arbitrary real quadratic field.  This congruence appears as a key step in the numerical criterion to determine non $p$-rationality, and whether the Iwasawa invariants $\mu=\lambda=0$. In the next section, we will use the results of this section to construct a family of real quadratic fields that are not $p$-rational. It is also a family of real quadratic fields whose Iwasawa invariant $\mu=\lambda=0$ with a further assumption on the class number. 

We first prepare some easy lemmas which we will need later on.
\begin{lemma}\label{prime to p}
Let $p$ be an odd prime. Let $\varepsilon$ be an element in $\Z_p^*$. Suppose that $t=\pm \varepsilon^k$ for some integer $k$ prime to $p$. Then we have 
\[
\varepsilon^{p-1}\equiv 1\Mod{p^2\Z_p} \Longleftrightarrow
t^{p-1}\equiv 1\Mod{p^2\Z_p}
\]
\end{lemma}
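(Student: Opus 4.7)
The plan is to reduce the statement to an elementary computation on the first layer of the unit filtration of $\Z_p^*$. First, since $p$ is odd, $p-1$ is even, so the sign vanishes under the $(p-1)$-th power: $t^{p-1} = (\pm\varepsilon^k)^{p-1} = \varepsilon^{k(p-1)} = (\varepsilon^{p-1})^k$. Setting $u := \varepsilon^{p-1}$, Fermat's little theorem in $\Z_p$ places $u$ in $1+p\Z_p$, so I can write $u = 1+pb$ for some $b\in\Z_p$. The lemma then reduces to the claim that $u \equiv 1 \Mod{p^2\Z_p}$ if and only if $u^k \equiv 1 \Mod{p^2\Z_p}$, under the assumption $\gcd(k,p)=1$.

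To prove this reduced claim, expand via the binomial theorem:
\[
u^k = (1+pb)^k = 1 + kpb + \sum_{i\geq 2} \binom{k}{i}(pb)^i.
\]
Every term with $i\geq 2$ lies in $p^2\Z_p$, so $u^k \equiv 1 + kpb \Mod{p^2\Z_p}$. Therefore $u^k \equiv 1 \Mod{p^2\Z_p}$ if and only if $p \mid kb$, and since $\gcd(k,p)=1$ this is equivalent to $p \mid b$, which is precisely $u \equiv 1 \Mod{p^2\Z_p}$. Both directions of the equivalence fall out of this single calculation at once.

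Conceptually, the content of the lemma is that raising to a power coprime to $p$ is an automorphism of the pro-$p$ group $1+p\Z_p$ which preserves the filtration $\{1+p^n\Z_p\}_{n\geq 1}$; the binomial calculation is just a hands-on verification on the first layer. I do not foresee any real obstacle here. The odd-prime hypothesis is used exactly once, to kill the sign $\pm$ after raising to the $(p-1)$-th power, and the coprimality assumption $\gcd(k,p)=1$ is used exactly once, to invert $k$ modulo $p$ in the final step.
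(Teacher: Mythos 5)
Your proof is correct, and the nontrivial direction is argued differently from the paper. Both proofs handle the sign and the easy direction identically (the sign dies because $p-1$ is even, and $(\varepsilon^{p-1})^k\equiv 1$ follows at once). For the converse, the paper writes $\varepsilon^{p-1}=1+pl$, observes that $\varepsilon^{p(p-1)}=(1+pl)^p\equiv 1\Mod{p^2\Z_p}$, and then uses a B\'ezout identity $xk+yp=1$ to recover $\varepsilon^{p-1}$ from $t^{p-1}$ as $\varepsilon^{xk(p-1)}\varepsilon^{yp(p-1)}$; in effect it inverts $k$ in the exponent group $(\Z/p)$ acting on $(1+p\Z_p)/(1+p^2\Z_p)$. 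You instead compute directly on that quotient: $(1+pb)^k\equiv 1+kpb\Mod{p^2\Z_p}$, so the condition becomes $p\mid kb$, which is equivalent to $p\mid b$ since $\gcd(k,p)=1$. Your version proves both directions in one stroke and makes the structural point (prime-to-$p$ powering is an automorphism of each graded piece of the unit filtration) explicit, while the paper's B\'ezout argument avoids any binomial expansion in the exponent $k$. One small point worth a sentence in your write-up: $k$ may be negative here (in the paper's application $t$ can be $\pm\varepsilon^{-k}$ for positive $k$), so the expansion $(1+pb)^k=\sum_{i\ge 0}\binom{k}{i}(pb)^i$ is the $p$-adic binomial series rather than a finite sum; it converges in $\Z_p$ and the coefficients $\binom{k}{i}$ are still integers, so nothing breaks, but you should either say this or reduce to $k>0$ by noting that $u\equiv 1\Mod{p^2\Z_p}$ if and only if $u^{-1}\equiv 1\Mod{p^2\Z_p}$.
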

\begin{proof}
          
   Assume we know $\varepsilon^{p-1}\equiv 1\Mod{p^2\Z_p}$. Then $t^{p-1}=\varepsilon^{k(p-1)}\equiv 1 \Mod{p^2\Z_p}$.

    Assume we know $t^{p-1}\equiv 1\Mod{p^2\Z_p}$. Since $\varepsilon^{p-1}\equiv 1\Mod{p\Z_p}$, we may assume $\varepsilon^{p-1}\equiv 1+pl\mod{p^2\Z_p}$ for some $l \in \Z_p$. Hence,
    \[
    \varepsilon^{p(p-1)}\equiv (1+pl)^p\equiv 1 \Mod{p^2\Z_p}.
    \]
   Since $\gcd(k,p)=1$, there exists integer $x$ and $y$ such that $xk+yp=1$. 
   \[
   \varepsilon^{p-1}=\varepsilon^{xk(p-1)}\cdot\varepsilon^{yp(p-1)}\equiv t^{x(p-1)}\cdot1^y\equiv 1\Mod{p^2\Z_p}.
   \]
 
\end{proof}

\begin{lemma}
    Let $D$ be a positive square-free integer and $K=\Q(\sqrt{D})$. Let $t$ be a unit in $\mO_K$. Assume that the odd prime $p$ is splits in $K$ as $p\mO_K=\p\bar{\p}$. Then
    \[ t^{p-1}\equiv 1\Mod{\p^2}\Longleftrightarrow
   t^{p-1}\equiv 1\Mod{\bar{\p}^2}\Longleftrightarrow
   t^{p-1}\equiv 1\Mod{p^2\mO_K}
    \]
\end{lemma}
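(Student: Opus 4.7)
The plan is to exploit the action of the nontrivial Galois automorphism $\sigma \in \Gal(K/\Q)$, which swaps $\p$ and $\bar\p$, combined with the fact that any unit $t$ in a quadratic field satisfies $t\,\sigma(t) = N_{K/\Q}(t) = \pm 1$, so $\sigma(t) = \pm t^{-1}$. Since we are raising to the $(p-1)$-th power and $p$ is odd, the sign will be harmless.

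For the first equivalence, I would apply $\sigma$ to the congruence $t^{p-1} \equiv 1 \Mod{\p^2}$ to obtain $\sigma(t)^{p-1} \equiv 1 \Mod{\bar\p^2}$. Because $p-1$ is even, $\sigma(t)^{p-1} = (\pm 1)^{p-1} t^{-(p-1)} = t^{-(p-1)}$, and multiplying through by the unit $t^{p-1}$ recovers $t^{p-1} \equiv 1 \Mod{\bar\p^2}$. The argument is symmetric in $\p$ and $\bar\p$, so the converse is identical.

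For the second equivalence, $p\mO_K = \p\bar\p$ gives $p^2\mO_K = \p^2\bar\p^2$, and the ideals $\p^2$ and $\bar\p^2$ are coprime. By the Chinese Remainder Theorem, $\mO_K/p^2\mO_K \cong \mO_K/\p^2 \times \mO_K/\bar\p^2$, so $t^{p-1} \equiv 1 \Mod{p^2\mO_K}$ holds if and only if $t^{p-1} \equiv 1$ modulo both $\p^2$ and $\bar\p^2$. Combined with the first equivalence, this finishes the proof.

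There is no substantive obstacle here: the lemma reduces to combining Galois symmetry with CRT. The only mild subtlety worth flagging is the use of $p$ odd to ensure that $(\pm 1)^{p-1} = 1$, so the sign ambiguity in $\sigma(t) = \pm t^{-1}$ disappears after raising to the $(p-1)$-th power.
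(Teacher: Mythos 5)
Your proposal is correct and follows essentially the same route as the paper: the paper likewise applies the Galois conjugation (in the form $t^{p-1}\bar{t}^{p-1}=1$, which is your observation that the sign in $\sigma(t)=\pm t^{-1}$ dies upon raising to the even exponent $p-1$) to pass between $\p^2$ and $\bar{\p}^2$, and then uses $\p^2\cap\bar{\p}^2=p^2\mO_K$, which is your CRT step. No gaps.
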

\begin{proof}
    Let $\bar{t}$ be the conjugate of $t$. Then $t^{p-1}\bar{t}^{p-1}=1$, since $t$ is a unit.  Now,
  \begin{equation*}
  \begin{split}
     t^{p-1}\equiv 1\Mod{\p^2}  & \Longleftrightarrow \bar{t}^{p-1}\equiv 1\Mod{\bar{\p}^2}  \\
       & \Longleftrightarrow t^{p-1}\bar{t}^{p-1}\equiv t^{p-1} \Mod{\bar{\p}^2}\\
       & \Longleftrightarrow 1\equiv  t^{p-1} \Mod{\bar{\p}^2}    
  \end{split}
 \end{equation*}
The last congruence comes the fact that $t^{p-1} -1 \in \p^2 \cap \bp^2 = p^2\mO_K$.
\end{proof}
\begin{remark}
   We can embed $K$ into $\Q_p$ by localization at prime $\p$ or $\bar{\p}$. The first congruence tells us that $t^{p-1}\equiv1 \Mod{p^2\Z_p}$ doesn't depend on the choice of embedding.  Later, we will write $t^{p-1}\equiv1 \Mod{p^2}$ for simplicity. One can understand it as $t^{p-1}\equiv1 \Mod{p^2\Z_p} $ by embedding $K$ into $\Q_p$ or one can understand it as $t^{p-1}\equiv1 \Mod{p^2\mO_K} $. It doesn't matter since they are all equivalent.  
\end{remark}
The proof of the following lemma is inspired by the proof of Theorem 3.4 of \cite{bouazzaoui2019fibonaccisequencesrealquadratic}.
\begin{lemma}\label{Fib to Units}
   Let $D$ be a positive square-free integer and $K=\Q(\sqrt{D})$. Assume that $p > 2$ is split in $K$. Let $t=a+b\sqrt{D}$ be a unit in $\mO_K$,where  $a, b\in \Z[\frac{1}{2}]$. Let $\Nrm$ denote the norm map from $K$ to $\Q$. Assume that $\Nrm(t)=-1$ and $p\mid a$. Then 
   \[
   t^{p-1}\equiv 1\Mod{p^2}\Longleftrightarrow a\equiv0 \Mod{p^2}
   \]
\end{lemma}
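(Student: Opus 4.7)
The plan is to work in the completion at one of the primes above $p$ and exploit the simple structure of units that are $\equiv \pm 1 \Mod p$. Since $p$ splits, Lemma 2.2 lets us pick an embedding $K\hookrightarrow \Q_p$ (sending $\sqrt{D}$ to a fixed square root in $\Z_p$) and then Lemma 2.3 says the congruence $t^{p-1}\equiv 1\Mod{p^2}$ can be checked in this single completion. The norm relation gives $a^2-Db^2=-1$; combined with $p\mid a$ this yields $Db^2\equiv 1\Mod p$, so $(b\sqrt{D})^2\equiv 1\Mod p$ and therefore $b\sqrt{D}\equiv \pm 1\Mod p$, i.e.\ $t\equiv \pm 1\Mod p$. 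Replacing $t$ by $-t$ if necessary — which changes neither $t^{p-1}$ (since $p-1$ is even) nor the divisibility $p^2\mid a$ — we may assume $t\equiv 1\Mod p$. (Note that $a,b\in\Z[\tfrac12]\subset \Z_p$ because $p$ is odd, so all congruences make sense in $\Z_p$.)

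Writing $t=1+pu$ with $u\in\Z_p$, the binomial expansion gives $t^{p-1}\equiv 1-pu\Mod{p^2}$, so $t^{p-1}\equiv 1\Mod{p^2}$ if and only if $t\equiv 1\Mod{p^2}$. Thus the lemma reduces to proving the equivalence
\[
t\equiv 1\Mod{p^2}\ \Longleftrightarrow\ a\equiv 0\Mod{p^2}.
\]
For the forward direction, use $t\bar t=\Nrm(t)=-1$: if $t\equiv 1\Mod{p^2}$ then $\bar t\equiv -1\Mod{p^2}$, and adding the two congruences gives $2a=t+\bar t\equiv 0\Mod{p^2}$. Since $p$ is odd, $a\equiv 0\Mod{p^2}$.

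Conversely, if $a\equiv 0\Mod{p^2}$, then $(b\sqrt{D})^{2}=Db^{2}\equiv 1\Mod{p^{2}}$. Because $p$ is odd, the only square roots of $1$ in $\Z/p^{2}\Z$ are $\pm 1$, and the normalization $b\sqrt{D}\equiv 1\Mod p$ forces $b\sqrt{D}\equiv 1\Mod{p^{2}}$. Hence $t=a+b\sqrt{D}\equiv 0+1=1\Mod{p^{2}}$, completing the equivalence. The whole argument is formal once the right embedding is chosen; the only potential obstacle is the WLOG step, which is justified precisely because the hypotheses and both conclusions are invariant under $t\mapsto -t$ when $p$ is odd, and because Lemma 2.2 guarantees that the congruence $t^{p-1}\equiv 1\Mod{p^2}$ is independent of the prime of $K$ lying above $p$ we chose to work at.
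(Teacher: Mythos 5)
Your proof is correct, but it takes a genuinely different route from the paper. The paper attaches to $t$ the Lucas-type sequence $F_{n+2}=2aF_{n+1}+F_n$, shows by induction that $F_{p-1}\equiv (p-1)a \Mod{a^2}$ (hence $\Mod{p^2}$ since $p\mid a$), and then transfers the statement to $t$ via the Binet formula $t^{p-1}-\bar t^{p-1}=(t-\bar t)F_{p-1}$ together with the factorization $t^{p-1}-\bar t^{p-1}=t^{1-p}(t^{p-1}-1)(t^{p-1}+1)$. You instead work entirely in one completion $K\hookrightarrow \Q_p$ (legitimate by the preceding lemma and remark on the independence of the chosen prime above $p$): after the harmless normalization $t\equiv 1\Mod p$ (both sides of the asserted equivalence are invariant under $t\mapsto -t$ because $p-1$ is even), the binomial expansion of $(1+pu)^{p-1}$ reduces the statement to $t\equiv 1\Mod{p^2}$, which you then convert to $a\equiv 0\Mod{p^2}$ in both directions using $t\bar t=-1$ (so $2a=t+\bar t\equiv 0$) and the fact that $1$ has only the square roots $\pm 1$ in $\Z/p^2\Z$. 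All the auxiliary facts you need ($\sqrt{D}\in\Z_p^\times$ because $p$ splits, $a,b\in\Z[\tfrac12]\subset\Z_p$ because $p$ is odd, $p\nmid b$) are in place. Your argument is arguably more transparent: it isolates the elementary $p$-adic fact that for $t\equiv 1\Mod p$ one has $t^{p-1}\equiv 1\Mod{p^2}\iff t\equiv 1\Mod{p^2}$, and avoids the induction on the recurrence; the paper's Fibonacci formulation, on the other hand, makes visible the link to the literature on Fibonacci sequences and real quadratic fields that it cites as inspiration.
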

\begin{proof}
        Let $\bar{t}=a-b\sqrt{D}$ be the Galois conjugates of $t$. Define the generalized Fibonacci sequence $\{F_n\}_n$ as
\[
F_{n+2}=(t+\bar{t})F_{n+1}-\Nrm(t)F_n= 2aF_{n+1}+F_n
\]
and $F_0=0$ and $F_1=1$. The Binet formula \cite{319878a9-c233-3261-b3af-4fbfdf59d6fd}
tells us 
\[
F_{n}=\frac{t^{n}-\bar{t}^{n}}{t-\bar{t}} \text{ for any } n\geq 1
\]

Computing further into the sequence, we have $F_2=2a, F_3=4a^2+1, F_4=8a^3+4a,$ and $ F_5=16a^4+12a^2+1$. By induction, we have 
\[
F_{2n}\equiv 2na\Mod{a^2}  \tab \text{ and } \tab F_{2n-1}\equiv 1\Mod{a^2}\]
and since we assumed that $p\mid a$, we have 
\[
F_{p-1}\equiv (p-1)a\Mod{p^2}.
\]
Hence,
\[
 F_{p-1}\equiv 0\Mod{p^2}\Longleftrightarrow p^2\mid a
\]
On the other hand, the Binet formula tells us 
\[
(t-\bar{t})F_{p-1}=t^{p-1}-\bar{t}^{p-1}=t^{1-p}(t^{p-1}-1)(t^{p-1}+1)
\]
Since $-1=t\bar{t}=a^2+b^2D$, and $p\mid a$, we have $p\nmid b$ and $p\nmid (t-\bar{t})$. Since $p\nmid t^{1-p}(t^{p-1}+1)$, we have 
\[
t^{p-1}\equiv 1\Mod{p^2}\Longleftrightarrow  F_{p-1}\equiv 0\Mod{p^2}.
\]
\end{proof}

\begin{lemma}\label{p split}
    Let $m^2p^{2r}+1=b^2D$, where $D$ is a square-free integer, $r\geq 2,m,b$ are positive integers, $p\geq 3$ is an odd prime and $\gcd(p,m)=1$. Let $K=\Q(\sqrt{D})$. Then $p$ splits in the real quadratic field $K$. 
\end{lemma}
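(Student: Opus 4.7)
The plan is to apply the standard splitting criterion for odd primes in a real quadratic field: since $p$ is odd, $p$ splits in $K=\Q(\sqrt{D})$ if and only if $p\nmid D$ and $D$ is a nonzero quadratic residue modulo $p$. So the whole task reduces to extracting this information from the relation $m^{2}p^{2r}+1=b^{2}D$.

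First I would reduce the defining equation modulo $p$. Because $r\geq 2$ (in fact $r\geq 1$ would suffice), the term $m^{2}p^{2r}$ vanishes modulo $p$, and one obtains
\[
b^{2}D\equiv 1\Mod{p}.
\]
The right-hand side is a unit modulo $p$, so neither $b$ nor $D$ can be divisible by $p$; in particular $p\nmid D$, which rules out ramification of $p$ in $K$. Moreover, the congruence $b^{2}D\equiv 1\Mod{p}$ can be rewritten as $D\equiv (b^{-1})^{2}\Mod{p}$, exhibiting $D$ as a nonzero square modulo $p$. Thus $\left(\tfrac{D}{p}\right)=1$.

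With $p\nmid D$ and $\left(\tfrac{D}{p}\right)=1$ both established, the standard Kummer–Dedekind / quadratic splitting criterion (which works for any odd prime in a quadratic field, regardless of whether $D\equiv 1\Mod 4$) immediately yields $p\mO_{K}=\p\bar{\p}$ with $\p\neq\bar{\p}$, completing the proof.

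There is no real obstacle here; the only thing to be mildly careful about is invoking the splitting criterion for $\Q(\sqrt{D})$ rather than for $\Z[\sqrt{D}]$, so that one does not need to worry about whether $D\equiv 1\Mod 4$ (the discriminant of $K$ differs from $4D$ only by a factor of $4$, which is coprime to the odd prime $p$, so the splitting behavior is determined by the Legendre symbol $\left(\tfrac{D}{p}\right)$ in either case).
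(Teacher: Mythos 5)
Your proposal is correct and follows essentially the same route as the paper: reduce $b^{2}D=m^{2}p^{2r}+1\equiv 1\Mod{p}$ to see that $D$ is a nonzero square modulo $p$, i.e.\ $\left(\frac{D}{p}\right)=1$, and then invoke the standard splitting criterion for the odd prime $p$ in $\Q(\sqrt{D})$. Your explicit remarks that $p\nmid b$, $p\nmid D$ and that the criterion is insensitive to $D\bmod 4$ are details the paper leaves implicit, but the argument is the same.
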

\begin{proof}
   We have $p$ splits in $K=\Q(\sqrt{D})$ if and only if the Legendre symbol $(\frac{D}{p})=1$. Notice that
    \[
    \left(\frac{D}{p}\right)= \left(\frac{b^2D}{p}\right)=\left(\frac{m^2p^{2r}+1}{p}\right)=\left(\frac{1}{p}\right)=1.
    \]
\end{proof}
Let $|\cdot|_{\infty}$ denote the usual absolute value on $\R$. 
\begin{theorem}\label{general setting}
    Let $m^2p^{2r}+1=b^2D$, where $D$ is a square-free integer, $r\geq 2,m,b$ are positive integers, $p\geq 3$ is an odd prime and $\gcd(p,m)=1$. Let $K=\Q(\sqrt{D})$, and let $\varepsilon_D$ be the fundamental unit of $K$.  Assume that \[
    |m|_{\infty}\leq \left|\frac{(1+\binom{p^{r-1}}{2})p^{p^{r-1}-r}}{2^{p^{r-1}}} \right|_{\infty}
    \]
 Then 
   \[
   \varepsilon_D^{p-1}\equiv 1\Mod{p^2}.
   \]
\end{theorem}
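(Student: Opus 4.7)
The plan is to pass through the auxiliary unit $\alpha := mp^r + b\sqrt{D} \in \mO_K^*$, which has $\Nrm(\alpha) = m^2p^{2r} - b^2D = -1$. Applying Lemma \ref{Fib to Units} with $a = mp^r$ immediately gives $\alpha^{p-1} \equiv 1 \Mod{p^2}$ (since $r \geq 2$ implies $p^2 \mid a$); tracing through that proof more carefully, the congruence $F_{p-1} \equiv (p-1)a \Mod{a^2}$ together with $v_p(\alpha - \bar\alpha) = v_p(\alpha^{1-p}) = v_p(\alpha^{p-1}+1) = 0$ sharpens this to $v_p(\alpha^{p-1}-1) = r$. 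Since $p$ splits in $K$ by Lemma \ref{p split}, fix an embedding $\mO_K \hookrightarrow \Z_p$ at a prime above $p$, so that $\alpha^{p-1}, \varepsilon_D^{p-1} \in 1 + p\Z_p$. Writing $\alpha = \varepsilon_D^{k_0}$ for the unique positive integer $k_0$, the $p$-adic logarithm applied to $\log_p(\alpha^{p-1}) = k_0 \log_p(\varepsilon_D^{p-1})$ yields
\[
r = v_p(\alpha^{p-1}-1) = v_p(k_0) + v_p(\varepsilon_D^{p-1}-1).
\]
Hence the conclusion $\varepsilon_D^{p-1} \equiv 1 \Mod{p^2}$ is equivalent to $v_p(k_0) \leq r-2$, i.e., to the assertion that $\alpha$ is not a $p^{r-1}$-th power of any element of $\mO_K^*$.

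Suppose for contradiction that $\alpha = \beta^n$ with $n := p^{r-1}$ and $\beta \in \mO_K^*$. Because $n$ is odd, $\Nrm(\beta) = -1$. Setting $s := \beta + \bar\beta \in \Z$, the elements $\beta,\bar\beta$ are the roots of $x^2 - sx - 1$, and taking the trace of $\alpha = \beta^n$ gives $V_n(s,-1) = 2mp^r$, where $V_n$ is the Lucas polynomial defined by $V_0 = 2$, $V_1 = s$, $V_{k+1} = sV_k + V_{k-1}$. I would prove by induction on $r$ that $V_{p^{r-1}}(s,-1) \equiv s \Mod p$: the base case $V_p(s,-1) \equiv s^p \equiv s \Mod p$ uses that the non-leading coefficients of $V_p(s,-1)$ are divisible by $p$, and the inductive step invokes the Lucas composition identity $V_{p^r}(s,-1) = V_p(V_{p^{r-1}}(s,-1), -1)$ (valid because $p^{r-1}$ is odd). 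Consequently $p \mid 2mp^r = V_n(s,-1)$ forces $p \mid s$. Since $V_n$ is an odd polynomial in $s$ with positive coefficients, the positivity of $2mp^r$ forces $s > 0$, and therefore $s \geq p$.

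The trivial bound $V_n(s,-1) \geq s^n \geq p^{p^{r-1}}$ now gives $m \geq p^{p^{r-1}-r}/2$. On the other hand, the elementary inequality $1 + \binom{n}{2} \leq 2^{n-1}$ for $n \geq 3$ (which holds for $n = p^{r-1} \geq 3$ whenever $p \geq 3, r \geq 2$) converts the hypothesis into
\[
m \leq \frac{\bigl(1+\binom{p^{r-1}}{2}\bigr) p^{p^{r-1}-r}}{2^{p^{r-1}}} \leq \frac{p^{p^{r-1}-r}}{2}.
\]
Since $p$ is odd, $p^{p^{r-1}-r}/2$ is a half-integer, so the positive integer $m$ must satisfy $m \leq (p^{p^{r-1}-r}-1)/2 < p^{p^{r-1}-r}/2$, contradicting the lower bound. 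The principal technical points are extracting the refined valuation $v_p(\alpha^{p-1}-1) = r$ from the proof of Lemma \ref{Fib to Units} and establishing the Lucas congruence $V_{p^{r-1}}(s,-1) \equiv s \Mod p$; the final comparison is then essentially a matching of exponents plus a parity check.
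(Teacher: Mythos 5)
Your proof is correct, but it takes a genuinely different route from the paper's. The paper writes $t=\pm\varepsilon_D^{p^l k}$ with $\gcd(k,p)=1$, sets $x+y\sqrt{D}=\varepsilon_D^{k}$, and argues via the binomial expansion of $(x+y\sqrt{D})^{p^l}$ that $p\mid x$; it then shows that $p^2\nmid x$ would force $l=r-1$ and an archimedean lower bound on $|m|_\infty$ contradicting the hypothesis, so $p^2\mid x$ and Lemmas \ref{Fib to Units} and \ref{prime to p} finish. You instead extract the exact valuation $v_p(\alpha^{p-1}-1)=r$ from the proof of Lemma \ref{Fib to Units} (this is legitimate: $F_{p-1}\equiv(p-1)mp^r\Mod{m^2p^{2r}}$ gives $v_p(F_{p-1})=r$ on the nose, and the remaining factors are $p$-adic units), then use the standard lifting-the-exponent identity $v_p(u^{k_0}-1)=v_p(k_0)+v_p(u-1)$ for $u\in 1+p\Z_p$ to reduce the entire theorem to the clean statement that $\alpha=mp^r+b\sqrt{D}$ is not a $p^{r-1}$-th power in $\mO_K^*$; that is ruled out by the trace computation $2mp^r=V_{p^{r-1}}(s,-1)\equiv s\Mod{p}$ (the Lucas congruence, which can be seen even more directly as $\beta^{p^{r-1}}+\bar\beta^{p^{r-1}}\equiv(\beta+\bar\beta)^{p^{r-1}}\equiv s\Mod{p\mO_K}$) together with the size estimate $2mp^r\geq s^{p^{r-1}}\geq p^{p^{r-1}}$. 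The two arguments are strategically parallel --- both isolate a ``bad case'' where a $p^{r-1}$-th root of $\alpha$ exists and kill it with essentially the same archimedean bound --- but your version buys a sharper structural statement (the congruence fails \emph{exactly} when $p^{r-1}\mid k_0$, and it recovers $n_2=r$ without knowing the fundamental unit explicitly, cf.\ Lemma \ref{n2 value}), at the cost of invoking the $p$-adic logarithm and Lucas-polynomial identities; the paper's argument is more elementary and its bound matches the hypothesis on $m$ verbatim, whereas you need the auxiliary inequality $1+\binom{p^{r-1}}{2}\leq 2^{p^{r-1}-1}$ and the half-integer observation (or simply the strict inequality $V_n(s,-1)>s^n$ for $n\geq 3$) to close the comparison.
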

\begin{proof}
    Write $t:=mp^r-b\sqrt{D}$.  Then $\Nrm(t)=(mp^r-b\sqrt{D})(mp^r+b\sqrt{D})=-1$. By Lemma \ref{Fib to Units}, we know that $t^{p-1}\equiv 1\Mod{p^2}$.  If $t=mp^r-b\sqrt{D}=\pm\varepsilon_D^k$ for some integer $k$ prime to $p$, then the conclusion holds by lemma \ref{prime to p}.

 Now assume $t=mp^r-b\sqrt{D}=\pm\varepsilon_D^{p^lk}$ for some integer $k$ prime to $p$ and $l\geq 1$. Let $x+y\sqrt{D}:=\varepsilon_D^k$, where $x,y\in\Z[\frac{1}{2}]$. we have
   \begin{equation} \label{expand}
\begin{split}
\pm(mp^r-b\sqrt{D})= &(x+y\sqrt{D})^{p^l} \\
  =& x^{p^l}+\binom{p^l}{1}x^{p^l-1}y\sqrt{D}+ \binom{p^l}{2}x^{p^l-2}(y\sqrt{D})^2+\binom{p^l}{3}x^{p^l-3}(y\sqrt{D})^3\\
    &+\cdots+ \binom{p^l}{p^l-1}x(y\sqrt{D})^{p^l-1}+ (y\sqrt{D})^{p^l}.
\end{split}
\end{equation}
Then 
\begin{equation} \label{a relation}
   \pm mp^r=x^{p^l}+\binom{p^l}{2}x^{p^l-2}(y\sqrt{D})^2+\cdots+\binom{p^l}{p^l-1}x(y\sqrt{D})^{p^l-1}. 
\end{equation}   
We know $\binom{p^l}{k}\equiv 0\Mod{p}$ for $1< k< p^l$. So, \eqref{a relation} implies
\[
0\equiv x^{p^l} \Mod{p}.
\]
Therefore $p\mid x$. 

Comparing the coefficient of $\sqrt{D}$ in equation \eqref{expand}, we have
\[
\pm b=\binom{p^l}{1}x^{p^l-1}y+\binom{p^l}{3}x^{p^l-3}y^3D+\cdots+y^{p^l}D^{\frac{p^l-1}{2}}.
\]
Reducing both sides modulo $p$, we have 
\[
\pm b\equiv y^{p^l}D^{\frac{p^l-1}{2}}\Mod{p}
\]
Since $p\nmid b$ and $p\nmid D$, we get $p\nmid y$.

Now, assume $p^2\nmid x$. Hence $v_p(x)=1$ where $v_p$ is the $p$-adic valuation. By \cite{51475}, one has $v_p(\binom{p^l}{i})=l-v_p(i)$. Hence, for $i\geq 1$,
\[
v_p\left(\binom{p^l}{i}x^{i}(y\sqrt{D})^{p^l-i}\right)=l-v_p(i)+i\geq l+1
\]
with equality holding if and only if $i=1$, since we have assumed $p$ is an odd prime. We have
\[
r=v_p(\pm mp^r)=v_p(\sum_{odd\: i}\binom{p^l}{i}x^{i}(y\sqrt{D})^{p^l-i})=l+1.
\]
Now, taking the real absolute value $|\cdot|_{\infty}$ of both side of equation \ref{a relation}, 
\begin{equation*}
    \begin{split}
        |mp^r|_{\infty}&=|x^{p^l}+\binom{p^l}{2}x^{p^l-2}(y\sqrt{D})^2+\cdots+\binom{p^l}{p^l-1}x(y\sqrt{D})^{p^l-1}|_{\infty}\\
        &> |x^{p^l}+\binom{p^l}{2}x^{p^l-2}(y^2D)|_{\infty}\\
        &=|x^{p^l}+\binom{p^l}{2}x^{p^l-2}(x^2+1)|_{\infty}\\
        &>|(1+\binom{p^l}{2})x^{p^l}|_{\infty}.
    \end{split}
\end{equation*}

We know $x\in \Z[1/2]$, $p\mid x$, and $x\neq0$. Hence $|x|_{\infty}\geq \frac{p}{2}$. Since $r=l+1$ 
\[
|m|_{\infty}> \left|\frac{(1+\binom{p^l}{2})x^{p^l}}{p^r}\right|_{\infty}>\left|\frac{(1+\binom{p^{r-1}}{2})p^{p^{r-1}-r}}{2^{p^{r-1}}}\right|_{\infty}
\]
which contradicts our assumption on $m$. Hence we must have $p^2|x$.

Now, we know $p^2\mid x$. By definition,
\[
-1=(\pm t)(\pm \bar{t})=(x+y\sqrt{D})^{p^l}(x-y\sqrt{D})^{p^l}.
\]
Hence, \[
(x+y\sqrt{D})(x-y\sqrt{D})=-1.
\]
The norm $\Nrm(x+y\sqrt{D})=-1$. By Lemma \ref{Fib to Units}, we know $(x+y\sqrt{D})^{p-1}\equiv 1\Mod{p^2}$. Then by Lemma
\ref{prime to p}, we have $\varepsilon_D^{p-1}\equiv 1\Mod{p^2}$.
\end{proof}
\begin{remark}
    The condition on the upper bound of $|m|_{\infty}$ is only used in the case that $t=mp^r-b\sqrt{D}$ is the $p$th-power of a unit. Empirically speaking, this case is rare. When $m$ is odd, 
    \[b^2D=1+m^2p^{2r}\equiv1+1=2\Mod{4}\]
    implies that $ D\equiv2\mod{4}$, and we can assume $x\in \Z$. This allows us to further relax the upper bound. However, the upper bound is enough for our purposes since we take $m=1$ in a later section. 
\end{remark}
We take $m=1$ in the next lemma. It is a generalization of Exercise 8.3.3 in \cite{PANT}.
\begin{lemma}\label{fundamental units}
        Let $p^{2r}+1=b^2D$, where $D$ is a square-free integer, $r\geq 2, b$ are positive integers, and $p\geq 3$ is an odd prime. Let $K=\Q(\sqrt{D})$. Then $p^r+b\sqrt{D}$ is a fundamental unit for $K$ except for the finitely many choices of $p$ and $r$ such that $D=2$.
\end{lemma}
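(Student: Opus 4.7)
The plan is to verify that $u := p^r + b\sqrt{D}$ is a unit of norm $-1$ in $\mO_K$ and then show that, unless $D = 2$, no smaller unit $>1$ exists. First, $\Nrm_{K/\Q}(u) = p^{2r} - b^2 D = -1$, so $u \in \mO_K^\times$. A parity check shows $p^{2r} + 1 \equiv 2 \pmod{4}$, forcing $b$ odd and $D \equiv 2 \pmod{4}$, so $\mO_K = \Z[\sqrt{D}]$. Assuming $u$ is not fundamental, I write $u = \pm \varepsilon^k$ with $|k| \geq 2$ and $\varepsilon > 1$ the fundamental unit; comparing norms forces $\Nrm(\varepsilon) = -1$ and $k$ odd, so after replacing $\varepsilon$ by $\varepsilon^{-1}$ if necessary we may assume $k \geq 3$.

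Writing $\varepsilon = A + B\sqrt{D}$ with $A, B \in \Z_{>0}$ and $B^2 D = A^2 + 1$, I would expand $\varepsilon^k = u$ and compare rational parts to obtain
\[
p^r = A \cdot Q_k(A^2),
\]
where $Q_k \in \Z_{\geq 0}[Y]$ is the Chebyshev-type polynomial with $Q_k(0) = k$ determined by the recurrence $X_{n+1} = 2A X_n + X_{n-1}$, $X_0 = 1$, $X_1 = A$ (so that $X_k = A Q_k(A^2)$, e.g.\ $Q_3(Y) = 4Y+3$ and $Q_5(Y) = 16Y^2 + 20Y + 5$). Since $p$ is prime and the right-hand side factors as a product of two positive integers, $A$ and $Q_k(A^2)$ must both be powers of $p$.

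If $A = 1$, then $B^2 D = 2$ forces $D = 2$, which is precisely the exceptional case. Otherwise $A \geq 2$, so $p \mid A$ and $A = p^s$ for some $s \geq 1$, giving $Q_k(p^{2s}) = p^{r-s}$. Writing $Q_k(Y) = k + Y R_k(Y)$ with $R_k \in \Z_{\geq 0}[Y]$ having $R_k(p^{2s}) > 0$, we get $p^{r-s} = k + p^{2s} R_k(p^{2s}) \geq k + p^{2s}$, so $r - s > 2s$, and reducing modulo $p^{2s}$ yields $p^{2s} \mid k$, hence $k \geq p^{2s} \geq 9$. On the other hand, $\varepsilon > 2 p^s$ and $u < 2 p^r + 1$ combine to give $k < r/s + O(1)$. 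These two-sided constraints pin $k$ and $r - sk$ into a narrow range, and the final contradiction comes from inspecting the identity $Q_k(p^{2s}) = p^{r-s}$ more carefully: the leading coefficient of $Q_k$ is $4^{(k-1)/2}$, which is coprime to $p$, and tracking the base-$p^{2s}$ digits of $Q_k(p^{2s})$ against the single-digit number $p^{r-s}$ shows no equality is possible for $k \geq 3$.

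Finally, it remains to bound the exceptional set $\{(p, r) : r \geq 2,\ D = 2\}$. Here $K = \Q(\sqrt{2})$ and the equation $(p^r)^2 - 2 b^2 = -1$ forces $p^r = G_m$ for some odd $m \geq 3$, where $(1 + \sqrt{2})^m = G_m + F_m \sqrt{2}$ (the sequence studied in the appendix). Thus $p^r$ with $r \geq 2$ must be a perfect prime power of exponent at least $2$ appearing in the sparse sequence $\{G_m\}_{m\text{ odd}}$; standard results on perfect powers in binary linear recurrences (via Bilu--Hanrot--Voutier, or Baker's theorem on linear forms in logarithms) imply only finitely many such $(p, r)$. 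The main obstacle will be closing the $A \geq 2$ branch fully rigorously through the base-$p^{2s}$ digit argument, and citing the correct Diophantine finiteness input for the $D = 2$ exceptions.
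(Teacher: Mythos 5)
Your setup runs parallel to the paper's proof up through the key factorization: norm $-1$ forces $k$ odd and $\Nrm(\varepsilon)=-1$, the rational part of $\varepsilon^k$ gives $p^r=A\cdot Q_k(A^2)$ (the paper's version is $p^r=v\cdot u$), the $A=1$ branch correctly produces the $D=2$ exception, and the finiteness of that exception rests on the same Diophantine input (perfect powers in a binary recurrence; the paper cites Peth\H{o}'s theorem, and your Baker / Bilu--Hanrot--Voutier reference is the same circle of ideas). The genuine gap is the $A\geq 2$ branch. Because you work with the full exponent $k$, the constant term of your polynomial is $Q_k(0)=k$, and your own deduction $p^{2s}\mid k$ shows this coefficient is at least $p^{2s}$ --- so the coefficients of $Q_k$ are not valid base-$p^{2s}$ digits and the announced ``digit comparison'' against the single power $p^{r-s}$ is not an argument. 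The two size bounds do not conflict either: $k\geq p^{2s}$ and $sk<r$ are simultaneously satisfiable once $r>sp^{2s}$. So the contradiction is never actually derived, as you acknowledge.

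The missing idea, which is how the paper closes this case, is to pass first to a prime divisor $q$ of $k$: write $\varepsilon^{k/q}=v+w\sqrt{D}$ and expand $(v+w\sqrt{D})^q$, obtaining $p^r=v\cdot u$ with
\[
u\equiv q\,w^{q-1}D^{(q-1)/2}\Mod{v},
\]
so that (since $v^2-w^2D=-1$ gives $\gcd(v,wD)=1$) any common factor of the two power-of-$p$ factors $v$ and $u$ divides $q$. If $q\neq p$ this forces $v=1$ at once, landing in the $D=2$ case. If $q=p$, your congruence in this normalized setting reads $p^{r-s'}\equiv p\Mod{p^{2s'}}$ with $r-s'>2s'\geq 2$, which is impossible because the left-hand side minus $p$ has $p$-adic valuation exactly $1$. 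In short, replacing the exponent $k$ by a prime $q\mid k$ turns the uncontrollable constant term $k$ into a prime, after which the coprimality-plus-valuation argument terminates; I recommend restructuring your $A\geq 2$ branch around $\eta=\varepsilon^{k/q}$ rather than around $\varepsilon$ itself.
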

\begin{proof}
    Note that $D \equiv 2 \Mod 4$, so the ring of integers of $\Q(\sqrt{D})$ is $\Z[\sqrt{D}]$.  Now, we know $t = p^r + b\sqrt{D}$ is a unit.  Let $\varepsilon_D = x + y\sqrt{D}$ be the fundamental unit (without loss of generality we may take $x,y >0$), and suppose $t= \varepsilon_D^k$ for some $k \in \Z^+$. We will prove that $k=1$ except for a finite number of choices for $p$ and $r$.

    Assume $k$ is even.   Then
    \[
    -1=\Nrm(p^r+b\sqrt{D})=\Nrm(\varepsilon_D^k)=(\pm 1)^k=1
    \]
    which is a contradiction. This also implies that $\Nrm(\varepsilon_D)=-1$.

Now, since $k$ is odd, let $q$ be an odd prime such that $q\mid k$. Write $v + w\sqrt{D} = \varepsilon_D^{k/q}$.  Then $t = (v + w\sqrt{D})^q$, and 
\[
p^r = \sum_{i = 0}^{(q-1)/2} {q \choose 2i} w^{2i}D^i v^{q - 2i} = v u
\]
where
\[
u = \sum_{i = 0}^{(q-1)/2} {q \choose 2i} w^{2i}D^i v^{q - 2i -1}.
\]
Note that we can factor out $v$ since $q$ is odd.  Notice  $v^2 - w^2D = \Nrm(\varepsilon_D^{k/q})=-1$ implies that $\gcd(v,w)=\gcd(v,D)=1$.  Hence $\gcd(v,u) = 1$. Now, $p^r= uv$, so it must be that $v = 1$ and $u = p^r$ since $u > 1$.  But then $v^2 - w^2D =-1$ so that $w^2 D = 2$ which implies $D = 2$ and $w = 1$. We have
\[
1+\sqrt{2}=v + w\sqrt{D} = \varepsilon_D^{k/q}
\]
Hence, $\varepsilon_D=1+\sqrt{2}$ and $k=q$. Now we have that 
\[ (1+\sqrt{2})^q=p^r+b\sqrt{2}\]
Define $G_n, F_n\in \Z$ as $(1+\sqrt{2})^n=G_n+F_n\sqrt{2}$. One can check that $G_{n+2}=2G_{n+1}+G_n$. By the following Theorem in \cite{petho}, there is only a finite choice of $n$ such that $G_n$ is a perfect power.
\end{proof}

\begin{theorem}[Theorem 7 in \cite{petho}]\label{LRS}
    Let $G_n$ be a non-degenerated second-order linear recurrence sequence and $G_{n+2}=A_1G_{n+1}+A_2G_n$. Then there exist effectively computable positive constants $c_1$ and $c_2$ depending only on $G_0,G_1,A_1, A_2$ such that if for the integers $n,x,d$ such that $d\geq 2$ the equation 
    \[
    G_n=x^d
    \]
    holds, then:
\begin{enumerate}
    \item If $|x|>1$, then $\max\{|x|,n,d\}<c_1$
    \item If $|x|\leq 1$, then $n<c_2$.
\end{enumerate} 
\end{theorem}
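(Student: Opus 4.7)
The plan is to reduce $G_n=x^d$ to a linear form in logarithms of algebraic numbers and apply Baker's theorem to extract effective bounds. Non-degeneracy means the characteristic polynomial $X^2-A_1X-A_2$ has two distinct roots $\alpha,\beta$ with $\alpha/\beta$ not a root of unity, so Binet's formula yields
\[
G_n = a\alpha^n + b\beta^n
\]
for constants $a,b$ effectively computable from $G_0,G_1,A_1,A_2$. After relabeling if needed, assume $|\alpha|\geq|\beta|$; the boundary case $|\alpha|=|\beta|$ with $\alpha/\beta$ not a root of unity can be treated separately by elementary Galois-theoretic considerations forcing $G_n$ to grow like a constant times $n|\alpha|^n$.

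Part (b) is immediate. If $|x|\leq 1$ then $x\in\{-1,0,1\}$, so $G_n\in\{-1,0,1\}$; a non-degenerate binary recurrence attains each fixed value only finitely often (otherwise $\alpha/\beta$ would be a root of unity), giving $n<c_2$ with $c_2$ effective.

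For part (a), I would first check that $n$, $d$, and $\log|x|$ are mutually linearly comparable: from $|x|^d=|G_n|=|a||\alpha|^n(1+o(1))$ and $|x|\geq 2$ one obtains $d\leq C_0 n$ and $n\leq C_0 d$ effectively, so bounding $n$ bounds all three quantities. Next, dividing $a\alpha^n+b\beta^n=x^d$ by $a\alpha^n$ yields
\[
\left|\frac{x^d}{a\alpha^n}-1\right|\leq C_1\left|\frac{\beta}{\alpha}\right|^n,
\]
which is exponentially small in $n$. Choosing an appropriate branch of logarithm then converts this into a linear form
\[
\Lambda := d\log x - n\log\alpha - \log a
\]
satisfying $|\Lambda|\leq C_2|\beta/\alpha|^n$.

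The main obstacle, as in essentially all results on perfect powers in recurrences, is to bound $|\Lambda|$ from below. This is where Baker's theorem on linear forms in logarithms is decisive: it yields an effective lower bound of shape $|\Lambda|\geq C_3(\max\{n,d\})^{-C_4}$ with $C_3, C_4$ depending effectively on $\alpha$, $a$ and on the height of $x$. Since $|x|\leq|a\alpha^n|^{1/d}(1+o(1))$, the height of $x$ is controlled polynomially in $n$, so the lower bound is effectively polynomial in $n$. Comparing with the exponentially decaying upper bound forces $n\log|\alpha/\beta|\leq C_5\log n+O(1)$, producing an effective bound $n<c_1$, whence $d$ and $|x|$ are bounded as well. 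Apart from the single but very deep invocation of Baker's theorem, the argument is routine manipulation of Binet's formula; Baker's theorem is also the source of the notoriously enormous (yet effective) numerical values of $c_1$ and $c_2$.
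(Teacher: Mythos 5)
The paper offers no proof of this statement: it is imported verbatim as Theorem~7 of Peth\H{o}'s paper, with a remark noting independent proofs elsewhere, so the only question is whether your sketch would actually establish it. It would not, because of a genuine gap at the decisive step. A Baker-type lower bound for $\Lambda = d\log x - n\log\alpha - \log a$ has the shape $\log|\Lambda| \geq -C\,h(a)\,h(\alpha)\,h(x)\,\log B$ with $B=\max\{n,d\}$: it degrades \emph{exponentially} in the height of the unknown $x$, not ``polynomially in $\max\{n,d\}$ with constants depending on the height of $x$'' as you assert. Since $h(x)=\log|x|\approx (n/d)\log|\alpha|$ grows linearly in $n/d$, comparing with the upper bound $\log|\Lambda|\le -n\log|\alpha/\beta|+O(1)$ yields only
\[
d \;\le\; C'\log n + O(1),
\]
that is, a bound for $d$ in terms of $n$ and no bound for $n$ whatsoever. (Sanity check: for $d=2$ and $|x|\asymp|\alpha|^{n/2}$ the form $\Lambda=2\log x-n\log\alpha-\log a$ has a Baker lower bound that decays like $\exp(-Cn\log n)$ through $h(x)$, which is weaker than the upper bound and gives nothing.) Closing the argument requires a second, structurally different step: in Peth\H{o}'s and Shorey--Stewart's treatments one writes $n=dk+r$ with $0\le r<d$ and recasts $G_n=x^d$ as the Thue-type equation $x^d-a\alpha^r(\alpha^k)^d=b\beta^n$ with exponentially small right-hand side, then applies effective bounds for Thue (or Thue--Mahler) equations for each of the finitely many remaining $d\ge 3$, with a separate effective argument for $d=2$ coming from Baker's work on hyperelliptic equations.

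Two secondary problems. First, in the boundary case $|\alpha|=|\beta|$ with $\alpha,\beta$ complex conjugates (which the general statement allows, even though the paper only needs $\alpha=1+\sqrt2$), one has $G_n=2\,\mathrm{Re}(a\alpha^n)$, which does \emph{not} grow like a constant times $n|\alpha|^n$ --- that is the repeated-root behaviour, which is excluded here; an effective lower bound for $|G_n|$ in the complex-conjugate case itself requires linear forms in logarithms, so this case cannot be dismissed as ``elementary Galois-theoretic considerations.'' Second, in part (b) the effectivity of ``a nondegenerate binary recurrence attains a fixed value only finitely often'' is immediate from growth only when $|\alpha|>|\beta|$; in the complex-conjugate case it again rests on Baker's method rather than on the root-of-unity observation alone.
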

\begin{remark}
    Theorem \ref{LRS} is proved independently by \cite{MR0666345} and \cite{MR0697495}. The explicit bound in Theorem \ref{LRS} is usually very large. 
\end{remark}
\begin{remark}
    In our case, $G_{n+2}=2G_{n+1}+G_n $ and $G_1=1, G_2=3$. Notice that Theorem \ref{LRS} doesn't require $x$ to be prime. We conjecture that $G_n=p^r$ has no solution $(n, p,r)$ with $p>2$ and $r\geq 2$. The authors are unable to prove this conjecture. However, we can prove that for a fixed odd $p$, there is at most one solution (see Lemma \ref{at most one} in the Appendix). It is worth pointing out that, in \cite{mcconnell2024newinfinitefamiliesnonprational}, McConnell constructed an infinite family of non $p$-rational real quadratic fields by using a kind of strong divisibility property of the sequence $d_l(D)$ (see the Appendix \ref{appendix} for the definition $d_l(D)$). In the Appendix, we prove that $\{G_n\}_n$ is also a strong divisibility sequence when indexes have the same $2$-adic valuation.
\end{remark}
\begin{lemma}\label{diff field}
   For $i=1,2$, let $p^{2r_i}+1=b_i^2D_{r_i}$, where $D_{r_i}$ is a square-free integer, $r_i\geq 2, b_i$ are positive integers, and $p\geq 3$ is an odd prime. Except for finitely many choices of $p_i$ and $r_i$, we have $\Q(\sqrt{D_{r_1}})=\Q(\sqrt{D_{r_2}})$ if and only if $r_1=r_2$. 
\end{lemma}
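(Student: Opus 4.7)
The plan is to reduce the equality of the quadratic fields $\Q(\sqrt{D_{r_1}})$ and $\Q(\sqrt{D_{r_2}})$ to the equality of the square-free integers $D_{r_1}$ and $D_{r_2}$, and then invoke Lemma \ref{fundamental units} to force $r_1 = r_2$ by uniqueness of the fundamental unit.

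The reverse implication is immediate. For the forward direction, I would first observe that two real quadratic fields $\Q(\sqrt{D_{r_1}})$ and $\Q(\sqrt{D_{r_2}})$ with both $D_{r_i}$ positive and square-free coincide if and only if $D_{r_1} = D_{r_2}$; denote the common value by $D$. I would then exclude the finitely many pairs $(p, r)$ for which Lemma \ref{fundamental units} fails --- by that lemma, these are exactly the pairs with $D = 2$, and there are only finitely many of them. For all remaining triples $(p, r_1, r_2)$, both $\varepsilon_i := p^{r_i} + b_i\sqrt{D}$ ($i = 1,2$) are fundamental units of $\Q(\sqrt{D})$. Since $p \geq 3$ and $r_i \geq 2$, each $\varepsilon_i$ is positive and strictly greater than $1$, so by the uniqueness of the fundamental unit normalized to be $>1$ in a real quadratic field, one has $\varepsilon_1 = \varepsilon_2$. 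Comparing rational and irrational parts in $p^{r_1} + b_1\sqrt{D} = p^{r_2} + b_2\sqrt{D}$ forces $p^{r_1} = p^{r_2}$ and $b_1 = b_2$, whence $r_1 = r_2$.

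The finitely many exceptions coming from Lemma \ref{fundamental units} furnish the ``except for finitely many choices'' clause of the statement, so no separate enumeration of bad $(p, r_1, r_2)$ is needed. I do not anticipate a significant obstacle here: once Lemma \ref{fundamental units} is in hand, the argument is essentially an application of the uniqueness of the fundamental unit, with only the minor bookkeeping of checking positivity and size to ensure the normalization applies.
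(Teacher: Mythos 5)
Your proposal is correct and follows essentially the same route as the paper: both invoke Lemma \ref{fundamental units} to identify $p^{r_i}+b_i\sqrt{D}$ as fundamental units outside the finitely many $D=2$ exceptions, and then compare them. The only cosmetic difference is that you resolve the ambiguity $\pm\varepsilon^{\pm1}$ by normalizing the fundamental unit to be $>1$, whereas the paper explicitly computes $(p^{r_2}+b_2\sqrt{D_{r_2}})^{-1}=-p^{r_2}+b_2\sqrt{D_{r_2}}$ to rule out the sign and inversion cases.
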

\begin{proof}
    By lemma \ref{fundamental units}, except for a finite number of cases,  the fundamental unit of $\Q(\sqrt{D_{r_1}})$ is $p^{r_1}+b_1\sqrt{D_{r_1}}$ and the fundamental unit of $\Q(\sqrt{D_{r_2}})$ is $ p^{r_2}+b_2\sqrt{D_{r_2}}$. Hence $\Q(\sqrt{D_{r_1}})=\Q(\sqrt{D_{r_2}})$ implies that
    \[
    p^{r_1}+b_1\sqrt{D_{r_1}}=\pm (p^{r_2}+b_2\sqrt{D_{r_2}})^{\pm1}.
    \]
    We know that $(p^{r_2}+b_2\sqrt{D_{r_2}})^{-1}=-p^{r_2}+b_2\sqrt{D_{r_2}}$. Hence we must have $r_1=r_2$.
\end{proof}

By lemma \ref{fundamental units} and lemma \ref{diff field}, for a fixed odd prime $p$,  the family of real quadratic fields $\{\Q(\sqrt{p^{2r}+1})|r\in \N, r\geq 2\}$ is an infinite family. Next, we will consider the $p$-rationality and Iwasawa invariants $\mu,\lambda$ for $\Q(\sqrt{m^2p^{2r}+1})$ in the following sections. 

\section{$p$-rationality}\label{prat}

We first recall the definition of a $p$-rational of a number field.  It was introduced by Movahhedi and Nguyen Quang Do \cite{MR1124802} \cite{nguyen1990arithmetique}\cite{AIF_1986__36_2_27_0}. The concept has been developed and related to many arithmetic subjects (see \cite{gras2003class}). There are many equivalent definitions of $p$-rationality for number fields. See also Section 3.3.2 of \cite{mcconnell2024newinfinitefamiliesnonprational} for some equivalent definitions for $p$-rationality of real quadratic fields. 

Let $K$ be a number field and $p$ be an odd prime. Let $L$ be the maximal abelian pro-$p$ extension of $K$ unramified outside $p$. Suppose $K$ has $r_1$ real embeddings and $2r_2$ complex embeddings. Assume Leopoldt's conjecture holds for $K$. Then there are only $r_2+1$ independent $\Z_p$-extensions of $K$. Hence
\[
\Gal(L/K)\cong \Z_p^{r_2+1}\times \mathcal{T}_K
\]
where $\mathcal{T}_K$ is a finite abelian $p$-group. We say $K$ is $p$-rational if and only if $\mathcal{T}_K=0$. The size of $\mathcal{T}_K$ is related to the special value of the $p$-adic $L$-function corresponding to $K$. The following theorem is due to Coates (see the Appendix of \cite{MR0460282}).
\begin{theorem}[Coates]\label{Coates}

    Let $K$ be a totally real field and $p$ be an odd prime. Assume Leopoldt's conjecture holds for $K$. Then $\#\mathcal{T}_K$ has the same $p$-adic valuation as 
    \[
    \frac{w_1(K(\mu_p))h_KR_p(K)\prod_{\p\mid p}(1-(\Nrm\p)^{-1})}{\sqrt{\Delta_{K/\Q}}}
\]
Here $\mu_p$ is the group of $p$-th roots of unity, $w_1(K(\mu_p))$ is the number of roots of unity of $K(\mu_p)$, $h_K$ is the class number of $K$, $R_p(K)$ is the $p$-adic regulator of $K$, $\Nrm\p$ is the absolute norm of $\p$, and  $\Delta_{K/\Q}$ is the discriminant of $K$.
\end{theorem}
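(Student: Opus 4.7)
The plan is to recognize this as a $p$-adic analogue of the analytic class number formula and to establish it via class field theory together with an index computation involving the $p$-adic logarithm. The outline is to realize $\mathcal{T}_K$ as the torsion in a quotient $U/\overline{E}$, and then convert that quotient into the stated expression.

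First I would set up the class field theory. Let $S$ be the set of primes above $p$, let $U = \prod_{\p \in S} U_\p^{(1)}$ be the product of principal local units, and let $\overline{E}$ denote the $p$-adic closure of the global units $E$ of $K$ embedded diagonally in $U$. Since $L$ is the maximal abelian pro-$p$ extension unramified outside $p$, class field theory yields an exact sequence
\[
0 \longrightarrow U/\overline{E} \longrightarrow \Gal(L/K) \longrightarrow A_K \longrightarrow 0,
\]
where $A_K$ is the $p$-part of the class group of $K$. Under Leopoldt's conjecture, the $\Z_p$-rank of $\overline{E}$ equals the $\Z$-rank $r_1+r_2-1$ of $E$, so $\Gal(L/K)$ has free rank $[K:\Q]-(r_1+r_2-1) = r_2+1$, matching the decomposition $\Z_p^{r_2+1}\oplus \mathcal{T}_K$. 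Consequently $\#\mathcal{T}_K$ is determined, up to $p$-adic valuation, by $|A_K|$ and by the torsion quotient of $U/\overline{E}$.

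Next I would compute $\#(U/\overline{E})_{\mathrm{tors}}$ up to $p$-adic valuation using the $p$-adic logarithm. On each local factor $U_\p^{(1)}$ the map $\log_p$ has finite kernel (the $p$-power roots of unity in $K_\p$, contributing the factor $w_1(K(\mu_p))$) and image a $\Z_p$-lattice of index roughly $1-(\Nrm\p)^{-1}$ inside $\mathcal{O}_{K_\p}$, producing the local factors $\prod_{\p\mid p}(1-(\Nrm\p)^{-1})$. The image of $\overline{E}$ sits as a sublattice of rank $r_1+r_2-1$ in the free part of $\log_p U$, and its covolume relative to the canonical $\Z_p$-lattice coming from $\mathcal{O}_K \otimes \Z_p$ is, by definition, $R_p(K)/\sqrt{\Delta_{K/\Q}}$ (the discriminant appears because it measures the index of $\mathcal{O}_K$ inside its dual with respect to the trace form). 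Assembling these via the snake lemma gives the stated expression up to a $p$-adic unit, with $h_K$ contributed by $|A_K|$.

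The main obstacle will be the careful normalization of the indices entering the covolume calculation, namely reconciling the lattice $\mathcal{O}_K\otimes\Z_p$ with $\prod_{\p\mid p}\mathcal{O}_{K_\p}$ and tracking where $\sqrt{\Delta_{K/\Q}}$ enters. This is precisely the bookkeeping that the statement is content to control only up to $p$-adic valuation, which is why the theorem is phrased with ``same $p$-adic valuation as'' rather than an equality. A slicker but heavier alternative would be to invoke the Main Conjecture for totally real fields and the $p$-adic class number formula at $s=1$ for the $p$-adic zeta function $\zeta_{K,p}(s)$; but for the present application, the class field theoretic computation sketched above is the most direct route.
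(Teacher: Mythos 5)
This theorem is not proved in the paper at all: it is imported verbatim from the appendix of Coates's article (the reference \cite{MR0460282}), so there is no in-paper argument to compare your sketch against. Judged on its own terms, your outline follows the standard and essentially correct strategy --- the class field theory exact sequence $0 \to U/\overline{E} \to \Gal(L/K) \to A_K \to 0$, the rank count via Leopoldt, and the computation of the torsion of $U/\overline{E}$ through $\log_p$ and lattice indices --- which is how this formula is established in the literature (e.g.\ in Gras's book on class field theory).

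However, two of your specific attributions would fail if carried out literally, and they are exactly the ``bookkeeping'' you defer. First, the factor $w_1(K(\mu_p))$ is \emph{not} the kernel of $\log_p$ on the local units: that kernel is $\prod_{\p \mid p}\mu_{p^\infty}(K_\p)$, which can be trivial even though $w_1(K(\mu_p)) \geq p$ always (already for $K=\Q$ with $p$ odd one has $\mu_{p^\infty}(\Q_p)=1$ while $v_p(w_1(\Q(\mu_p)))=1$). The term $w_1(K(\mu_p))$ is a global quantity arising from the comparison with $K(\mu_p)$ in Coates's normalization of the residue of the $p$-adic zeta function. Second, the local index $[\mathcal{O}_{K_\p} : \log_p U_\p^{(1)}]$ has $p$-adic valuation $+f_\p$ (for $\p$ unramified it equals $\Nrm\p$), whereas $1-(\Nrm\p)^{-1}$ has valuation $-f_\p$; so the Euler factors do not come directly from that index but only after cancellation against the $w_1$ term and the $\sqrt{\Delta_{K/\Q}}$ normalization (one can sanity-check with $K=\Q$, where $U/\overline{E}\cong\Z_p$ is torsion-free and the displayed product indeed has valuation $1-1=0$). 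In short, the skeleton is right, but the grouping of factors in the stated formula is not the ``natural'' one produced by the class-field-theoretic computation, and reconciling the two is the actual content of Coates's proof; as written, your sketch asserts identifications that are false term by term even though the product is correct.
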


\begin{theorem}\label{main1}
Let $r\geq 2,m$ be positive integers, $p$ be an odd prime, and $\gcd(p,m)=1$. Denote $K=\Q(\sqrt{m^2p^{2r}+1})$, and assume that \[
    |m|_{\infty}\leq \left|\frac{(1+\binom{p^{r-1}}{2})p^{p^{r-1}-r}}{2^{p^{r-1}}} \right|_{\infty}.
    \]
 Then  $K$ is a non $p$-rational field.
\end{theorem}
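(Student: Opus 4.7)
The plan is to combine Theorem \ref{general setting} with Coates' formula (Theorem \ref{Coates}) and simply chase the $p$-adic valuations of each factor. First, write $m^2p^{2r}+1 = b^2 D$ with $D$ squarefree, so $K = \Q(\sqrt{D})$. Then Theorem \ref{general setting} applies under the stated bound on $|m|_\infty$ and yields
\[
\varepsilon_D^{p-1} \equiv 1 \Mod{p^2}.
\]
Since real quadratic fields are abelian over $\Q$, Leopoldt's conjecture is known (by Brumer), so Theorem \ref{Coates} is available.

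Next I would translate the congruence on $\varepsilon_D$ into a statement about the $p$-adic regulator. Because $K$ is real quadratic with $r_1 = 2$, $r_2 = 0$, there is a single fundamental unit, and the $p$-adic regulator reduces (up to sign) to $\log_p(\sigma(\varepsilon_D))$ for either embedding $\sigma \colon K \hookrightarrow \Q_p$ induced by a prime of $K$ above $p$. By Lemma \ref{p split}, $p$ splits in $K$, so both embeddings land in $\Q_p$ and the preceding remarks make the value unambiguous. From $\varepsilon_D^{p-1} \equiv 1 \Mod{p^2}$ and the fact that $\gcd(p-1, p) = 1$, one gets
\[
v_p(\log_p \varepsilon_D) \;\geq\; 2,
\]
hence $v_p(R_p(K)) \geq 2$.

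I would then tabulate the $p$-adic valuation of each ingredient in Coates' formula. Because $p$ splits, it is unramified in $K$, so $v_p(\sqrt{\Delta_{K/\Q}}) = 0$. Each prime $\mathfrak{p} \mid p$ satisfies $N\mathfrak{p} = p$, so $v_p\bigl(\prod_{\mathfrak{p}\mid p}(1 - N\mathfrak{p}^{-1})\bigr) = v_p\bigl((1-1/p)^2\bigr) = -2$. Finally, $K(\mu_p)$ has degree at most $2(p-1)$ over $\Q$, which (for odd $p$) is not divisible by $p(p-1) = [\Q(\mu_{p^2}):\Q]$, so $\mu_{p^2} \not\subset K(\mu_p)$ and $v_p(w_1(K(\mu_p))) = 1$. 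Assembling Coates' formula gives
\[
v_p(\#\mathcal{T}_K) \;=\; 1 + v_p(h_K) + v_p(R_p(K)) - 2 \;\geq\; 1 + 0 + 2 - 2 \;=\; 1,
\]
so $\mathcal{T}_K \neq 0$, i.e., $K$ is non $p$-rational.

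I expect the only non-routine issue to be the identification of $R_p(K)$ with $\pm\log_p \varepsilon_D$ under a chosen embedding, together with the verification that the resulting $p$-adic valuation is well-defined (independent of the choice of prime above $p$); this is handled by the remark following Lemma 2.2 since $\varepsilon_D \cdot \overline{\varepsilon_D} = \pm 1$ forces $\log_p \sigma_1(\varepsilon_D) = -\log_p \sigma_2(\varepsilon_D)$. Everything else is a matter of reading off valuations, so no serious obstacle is anticipated.
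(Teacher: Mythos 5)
Your proposal is correct and follows essentially the same route as the paper: apply Theorem \ref{general setting} to get $\varepsilon_D^{p-1}\equiv 1\Mod{p^2}$, translate this into $v_p(R_p(K))\geq 2$, and then read off the $p$-adic valuations of the factors in Coates' formula ($v_p(w_1)=1$, Euler factors contributing $-2$, discriminant prime to $p$) to conclude $v_p(\#\mathcal{T}_K)\geq 1$. The extra justifications you supply (Leopoldt via Brumer, well-definedness of the regulator under the two embeddings) are correct and only make explicit what the paper leaves implicit.
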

\begin{proof}
    We will use Theorem \ref{Coates} of Coates. Since $K$ is a real quadratic field, $w_1(K(\mu_p))=p$. By Lemma \ref{p split}, $p$ splits in $K$. Hence, $\prod_{\p\mid p}(1-(\Nrm\p)^{-1})=(p-1)^2/p^2$. Let $\varepsilon_D$ be the fundamental unit of $K$. Notice that the $p$-adic regulator $R_p(K)=\log_p(\varepsilon_D)\equiv 0\Mod{p^2}\Longleftrightarrow \varepsilon_D^{p-1}\equiv 1\Mod{p^2}$. By Lemma \ref{general setting}, we know that $v_p(R_p(K))\geq 2$. Since $\gcd(p,m^2p^{2r}+1)=1$, the discriminant $\Delta_{K/\Q}$ is prime to $p$.  Putting everything together,
    \begin{equation*}
    \begin{split}
         v_p \left( \omega(K(\mu_p))\prod_{\p\mid p}(1-\Nrm(\p)^{-1})\frac{R_p(K)h_K}{\sqrt{\Delta_{K/\Q}}}\right)=&v_p(\omega(K(\mu_p)))+v_p\left(\prod_{\p\mid p}(1-\Nrm(\p)^{-1})\right)\\
         &+v_p(R_p(K))+v_p(h_K)-v_p\left(\sqrt{\Delta_{K/\Q}}\right)\\
         \geq&1-2+2+v_p(h_K)-0\\
         \geq& 1.
    \end{split}    
    \end{equation*}   
Hence, $v_p(\mathcal{T}_K)\geq 1$ by Theorem \ref{Coates}, and therefore the real quadratic field $K$ is a non $p$-rational field.

\end{proof}

\begin{corollary}\label{inf family}
    For a fixed odd prime $p$, there is an infinite family of non $p$-rational real quadratic fields.
\end{corollary}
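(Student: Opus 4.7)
The plan is to specialize Theorem \ref{main1} by taking $m=1$ and letting $r$ vary over integers $\geq 2$. This gives a family of candidate fields $K_r = \Q(\sqrt{p^{2r}+1})$, and I would argue two things: first, that the hypothesis of Theorem \ref{main1} is satisfied for all $r \geq 2$, and second, that the fields $K_r$ are pairwise distinct for all but finitely many $r$.

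For the first point, I would check that with $m=1$, the numerical bound
\[
1 \leq \left|\frac{(1+\binom{p^{r-1}}{2})p^{p^{r-1}-r}}{2^{p^{r-1}}}\right|_{\infty}
\]
holds for every odd prime $p \geq 3$ and every $r \geq 2$. The right-hand side factors as $(1+\binom{p^{r-1}}{2})\cdot (p/2)^{p^{r-1}}\cdot p^{-r}$. Since $p \geq 3$, we have $p/2 \geq 3/2 > 1$, so $(p/2)^{p^{r-1}}$ grows doubly exponentially in $r$, while $p^r$ grows only exponentially; a short direct estimate handles the base case $r = 2$ (e.g.\ for $p = 3$ the value is $12/8 = 3/2 \geq 1$) and an easy induction or monotonicity argument handles $r \geq 3$. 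Hence Theorem \ref{main1} applies to every $K_r$ in the family.

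For the second point, Lemma \ref{diff field} shows that for fixed odd $p$, the assignment $r \mapsto \Q(\sqrt{p^{2r}+1})$ is injective up to finitely many exceptions (corresponding to the cases in Lemma \ref{fundamental units} where $D = 2$). Therefore $\{K_r : r \geq 2\}$ contains infinitely many distinct real quadratic fields, each of which is non $p$-rational by the previous step.

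The routine elementary inequality is the only step requiring any work, and it is not a serious obstacle; everything else is a direct citation of Theorem \ref{main1} and Lemma \ref{diff field}. In particular, there is no need to revisit the class number, the regulator, or Coates' formula, since those have been absorbed into Theorem \ref{main1}.
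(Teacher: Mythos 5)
Your proposal is correct and follows essentially the same route as the paper: specialize Theorem \ref{main1} to $m=1$ and invoke Lemmas \ref{fundamental units} and \ref{diff field} to see that the fields $\Q(\sqrt{p^{2r}+1})$ are pairwise distinct up to finitely many exceptions. Your explicit verification that the bound on $|m|_{\infty}$ holds for $m=1$ (e.g.\ the value $12/8=3/2$ for $p=3$, $r=2$) is a small point the paper leaves implicit, but otherwise the arguments coincide.
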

\begin{proof}
    By Theorem \ref{main1}, the members of the following set are non $p$-rational real quadratic fields.
    \[
   \left \{\Q(\sqrt{m^2p^{2r}+1}) \, \big| \, r\geq 2,m \in \Z^+, \gcd(p,m)=1,  |m|_{\infty}\leq \left|\frac{(1+\binom{p^{r-1}}{2})p^{p^{r-1}-r}}{2^{p^{r-1}}} \right|_{\infty} \right\}
    \]
    This set contains the following subset,
    \[
  \{\Q(\sqrt{p^{2r}+1}) \, | \, r\geq 2\}.
    \]
By Lemmas  \ref{fundamental units} and \ref{diff field}, this subset is an infinite set. Hence, we have an infinite family of non $p$-rational real quadratic fields.
\end{proof}
The family of Corollary \ref{inf family} overlaps with some families found in \cite{GRAS_2023} and \cite{gras2019heuristics}. In Section 4.3 of \cite{gras2019heuristics}, Gras considers the real quadratic fields $\Q(\sqrt{m^2p^{2r}+1})$, but he assumes that $m^2p^{2r}+1$ is square-free.

\section{Iwasawa Invariants}\label{iwasawa inv}
In this section, we will consider the Iwasawa invariants $\mu$ and $\lambda$ of the cyclotomic $\Z_p$-extension of $\Q(\sqrt{p^{2r}+1})$. In \cite{MR0401702}, Greenberg conjectured that the Iwasawa invariant $\mu=\lambda=0$ for the cyclotomic $\Z_p$-extension for any totally real field. To test the conjecture, other authors have developed numerical criterion to determine when $\mu=\lambda=0$ for real quadratic fields (see for example \cite{FK}\cite{FK2}\cite{Fukuda1995TheI}\cite{MR1373702}\cite{MR1854114}).  It is then natural to use these criteria to construct infinite families of real quadratic fields such that Greenberg's conjecture holds (see for example \cite{Ozaki1997OnTI}\cite{FK3}\cite{MR0958035}\cite{KR}). In this section, we first recall a numerical criterion developed by Fukuda and Komatsu \cite{MR0816225}. Then we apply the criterion to the real quadratic field $\Q(\sqrt{p^{2r}+1})$. 

Let $K$ be any real quadratic field and $p$ be an odd prime. Let $h_K$ be the class number of $K$. Assume that $p$ splits in $K$ as $p\mO_K=\p\bar{\p}$. Let $\alpha$ be a generator of the principal ideal $\bar{\p}^{h_K}$. Let $\varepsilon_K$ be the fundamental unit of $K$. Define the integers $n_1$ and $n_2$ such that 
\[
\alpha^{p-1}\equiv 1\Mod{\p^{n_1}} \tab \text{ and } \tab \alpha^{p-1}\not\equiv 1\Mod{\p^{n_1+1}}
\]
\[
\varepsilon_K^{p-1}\equiv 1\Mod{\p^{n_2}} \tab \text{ and } \tab \varepsilon_K^{p-1}\not\equiv 1\Mod{\p^{n_2+1}}
\]
Despite our choice of $\alpha$, the value of $n_1$ is uniquely determined under the condition $n_1\leq n_2$. Fukuda and Komatsu, among other authors, have developed a series of criterion in terms of invariant $n_1, n_2$ to check Greenberg's conjecture for real quadratic fields \cite{FK}\cite{FK2}\cite{Fukuda1995TheI}.  Here we only recall one criterion from \cite{MR0816225}.

Let $K=K_0\subset K_1\subset K_2\cdots\subset K_\infty$ be the cyclotomic $\Z_p$-extension of $K$. Let $A_n$ be the $p$-primary part of the class group of $K_n$. Let $D_n$ be the subgroup of $A_n$ generated by the prime ideals above $p$. 

\begin{theorem}[Fukuda and Komatsu \cite{MR0816225}]\label{fukuda and komatsu}
Let $K$ be a real quadratic field and $p$ an odd prime which splits in $K/\Q$. Assume that 
\begin{enumerate}
    \item $n_1=1$
    \item $A_0=D_0$
\end{enumerate}
Then, for $n\geq n_2-1$, we have $|A_n|=|D_n|=|D_0|\cdot p^{n_2-1}$.
\end{theorem}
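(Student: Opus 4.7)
The plan is to establish $|A_n| = |D_n| = |D_0|\cdot p^{n_2-1}$ in two stages: first show $A_n = D_n$ for all sufficiently large $n$ using the hypothesis $A_0 = D_0$, and then compute $|D_n|$ explicitly using $n_1 = 1$ together with the invariant $n_2$.

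For the first stage, I would form the inverse limits $X = \varprojlim A_n$ and $Y = \varprojlim D_n$ under the norm maps, regarded as modules over the Iwasawa algebra $\Lambda = \Z_p[[\Gal(K_\infty/K)]]$. Because $p$ splits in $K$ and is totally ramified in $\Q_\infty/\Q$, both $\p$ and $\bar{\p}$ are totally ramified in $K_\infty/K$. Iwasawa's control theorem then yields a surjection $X_\Gamma \twoheadrightarrow A_0$ whose kernel is generated by the images of inertia at the primes above $p$; since these inertia elements lie in $Y$, their images in $X_\Gamma$ are contained in the image of $Y_\Gamma$. Combined with $A_0 = D_0$, right-exactness of coinvariants gives $(X/Y)_\Gamma = 0$, and Nakayama's lemma applied to the compact $\Lambda$-module $X/Y$ forces $X = Y$. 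Descending back to finite levels then yields $A_n = D_n$ for all $n$ large enough, in particular for $n \geq n_2 - 1$.

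For the second stage, the unique prime $\bar{\p}_n$ of $K_n$ above $\bar{\p}$ satisfies $\bar{\p}_n^{h_K p^n} = (\alpha)$ by total ramification, so the order of $[\bar{\p}_n]$ in $A_n$ divides $h_K p^n$. Class field theory identifies this order with $h_K p^k$, where $k$ is the smallest integer such that $\alpha$ becomes a $p^{n-k}$-th power in $K_{n,\p_n}^\times$ modulo the image of global units. The hypothesis $n_1 = 1$ says $\alpha^{p-1} \in U^1_\p \setminus U^2_\p$, so $\alpha$ is not a local $p$-th power at $\p$; the only obstruction to principalization therefore comes from comparing $\alpha$ with powers of $\varepsilon_K$, whose $(p-1)$-st power sits at depth exactly $n_2$ in the filtration $U^1_\p \supset U^2_\p \supset \ldots$. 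Tracking how these depths transform under the totally ramified inclusions $K_\p \hookrightarrow K_{n,\p_n}$ shows that $k$ stabilizes at $n_2 - 1$ as soon as $n \geq n_2 - 1$, producing $|D_n| = h_K \cdot p^{n_2 - 1}$. Combined with $A_n = D_n$ from stage one, this gives the claim.

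The hardest step will be the explicit depth computation in stage two: raising to $p$-th powers inside $K_{n,\p_n}$ shifts the principal-unit filtration by an absolute ramification index that grows like $p^n$, and the shifted images of $\alpha$ and $\varepsilon_K$ must be tracked modulo the action of $\Gal(K_n/K)$. The conditions $n_1 = 1$ and the implicit $n_1 \leq n_2$ are calibrated precisely to prevent extra cancellation in this tracking, so that the index stabilizes cleanly at $p^{n_2 - 1}$; relaxing either would force correction terms that change the formula.
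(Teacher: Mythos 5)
First, a point of context: the paper does not prove this statement at all --- it is quoted verbatim as a theorem of Fukuda and Komatsu \cite{MR0816225} and then applied, so there is no in-paper proof to compare against. Judged on its own terms, your proposal captures the right overall shape (reduce to $A_n = D_n$ via a Nakayama argument, then compute $|D_n|$ from $n_1$ and $n_2$), but Stage 2 has a genuine gap. The pivotal claim --- that class field theory identifies the order of $[\bar{\p}_n]$ with the smallest $k$ such that $\alpha$ becomes a $p^{n-k}$-th power in the completion $K_{n,\p_n}^\times$ modulo global units --- is not a valid principle: principality of $\bar{\p}_n^{h_K p^{n-j}}$ is a global condition, and a local power criterion at one place does not detect it. The actual mechanism in Fukuda--Komatsu is Chevalley's ambiguous class number formula, which expresses $|A_n^{\Gal(K_n/K)}|$ in terms of the unit norm index $[E_K : E_K \cap N_{K_n/K} K_n^\times]$; the invariants $n_1$ and $n_2$ enter precisely through local norm residue computations at $\p$ and $\bar{\p}$ (via Hasse's norm theorem), and the bound $n \geq n_2 - 1$ is where that index stabilizes. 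Your sketch also asserts that ``the only obstruction to principalization comes from comparing $\alpha$ with powers of $\varepsilon_K$,'' but the unit group of $K_n$ has rank $2p^n - 1$, and controlling the contribution of the full unit group of $K_n$ (not just the powers of $\varepsilon_K$ coming from the base field) is exactly the hard part that the ambiguous-class-number machinery is designed to handle. Finally, the intermediate formula $|D_n| = h_K \cdot p^{n_2-1}$ cannot be right as written: $D_n$ is a subgroup of the $p$-group $A_n$, so its order is a power of $p$, and the theorem's formula involves $|D_0|$, not $h_K$ --- you have conflated the order of the ideal class of $\bar{\p}_n$ with the order of its image in the $p$-primary part.

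Stage 1 is closer to a standard argument (it is essentially Greenberg's), but it too leaves a step asserted rather than proved: for the Nakayama argument to run, you need the kernel of $X_\Gamma \twoheadrightarrow A_0$ to be contained in the image of $Y_\Gamma$, which requires identifying the extra generators of that kernel (arising because two primes of $K$ ramify in $K_\infty/K$) as classes of primes above $p$. This is true but is the content of a lemma, not a formality. Note also that Nakayama gives $A_n = D_n$ for all $n$, while the quantitative content of the theorem --- the explicit value $|D_0|\cdot p^{n_2-1}$ and the explicit threshold $n_2 - 1$ --- lives entirely in the part of the argument your proposal treats most loosely.
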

    Let $p^{2r}+1=b^2D$, where $D$ is a square-free integer, $r\geq 2, b$ are positive integers, and $p\geq 3$ is an odd prime. Let $K=\Q(\sqrt{D})$. By lemma \ref{p split}, we know $p$ splits in $K$, say $p\mO_K=\p\bar{\p}$. Notice that we can factor $p^{2r}=b^2D-1=(b\sqrt{D}+1)(b\sqrt{D}-1)$ in $\mO_K$. Hence, we can take $\p^{2r}=(b\sqrt{D}-1)$ and $\bar{\p}^{2r}=(b\sqrt{D}+1)$. Let $\varepsilon_D$ be the fundamental unit of $K$. Then by Theorem \ref{general setting}, we know 
   \[
   \varepsilon_D^{p-1}\equiv 1\Mod{p^2}
   \]
   and therefore $n_2\geq 2$. In fact, we can further determine the value of $n_2$ if $D\neq 2$.
   \begin{lemma}\label{n2 value}
       $n_2=r$ except for finitely many choices of $p$ and $r$.
   \end{lemma}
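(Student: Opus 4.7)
The plan is to compute $\varepsilon_D^{p-1}$ modulo $\p^{r+1}$ directly from the explicit form of the fundamental unit. By Lemma \ref{fundamental units}, outside the finitely many cases where $D=2$, we have $\varepsilon_D = p^r + b\sqrt{D}$. Combined with Theorem \ref{general setting}, we already know $n_2\geq 2$, and so it will suffice to show $\varepsilon_D^{p-1}\not\equiv 1\Mod{\p^{r+1}}$ while $\varepsilon_D^{p-1}\equiv 1\Mod{\p^{r}}$.

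I would work in the $\p$-adic completion $K_\p\cong \Q_p$. Since $\p^{2r}=(b\sqrt{D}-1)$, under this completion $b\sqrt{D}$ is a unit satisfying $b\sqrt{D}\equiv 1\Mod{p^{2r}\Z_p}$. Writing $\alpha:=p^r/(b\sqrt{D})$, we have $\varepsilon_D=(b\sqrt{D})(1+\alpha)$, with $v_\p(\alpha)=r$. For the first factor, squaring gives $(b\sqrt{D})^2=1+p^{2r}$, so
\[
(b\sqrt{D})^{p-1}=(1+p^{2r})^{(p-1)/2}\equiv 1\Mod{p^{2r}\Z_p},
\]
which is stronger than $\equiv 1\Mod{\p^{r+1}}$ since $2r\geq r+1$. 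For the second factor, because $v_\p(\alpha^2)=2r\geq r+1$, the binomial expansion gives
\[
(1+\alpha)^{p-1}\equiv 1+(p-1)\alpha\Mod{\p^{r+1}},
\]
and since $v_\p(p\alpha)=r+1$ we further reduce $(p-1)\alpha\equiv -\alpha\Mod{\p^{r+1}}$. Finally, from $b\sqrt{D}\equiv 1\Mod{p^{2r}\Z_p}$ we get $\alpha\equiv p^r\Mod{p^{3r}\Z_p}$, hence $\alpha\equiv p^r\Mod{\p^{r+1}}$. Multiplying,
\[
\varepsilon_D^{p-1}\equiv 1-p^r\Mod{\p^{r+1}}.
\]
Since $v_\p(p^r)=r$ exactly, this establishes $n_2=r$.

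The main obstacle is routine $\p$-adic bookkeeping: one must carefully verify which terms in the binomial expansions lie in $\p^{r+1}$, and track the $\p$-adic behavior of $b\sqrt{D}$ in the completion. A minor subtlety is the embedding convention; the alternative choice $\bp$ over $\p$ flips the sign of $\alpha$ (so one obtains $1+p^r$ instead of $1-p^r$), but the $\p$-adic valuation of $\varepsilon_D^{p-1}-1$ is unaffected, so the conclusion $n_2=r$ is independent of the choice. The excluded cases $D=2$ are exactly the finitely many exceptions already permitted by Lemma \ref{fundamental units}.
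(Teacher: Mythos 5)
Your proposal is correct and follows essentially the same route as the paper: both arguments take the explicit fundamental unit $p^r+b\sqrt{D}$ from Lemma \ref{fundamental units}, use $b\sqrt{D}\equiv 1\Mod{\p^{2r}}$ with $2r\geq r+1$, and read off the valuation of $\varepsilon_D^{p-1}-1$ from a binomial expansion (the paper writes the leading term as $1+(p-1)p^r$, which agrees with your $1-p^r$ modulo $\p^{r+1}$). Your factorization $\varepsilon_D=(b\sqrt{D})(1+\alpha)$ is just a slightly different bookkeeping of the same computation.
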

\begin{proof}
    By Lemma \ref{fundamental units}, except for a finite choice of $p$ and $r$ such that $D=2$, we can take $p^r+b\sqrt{D}$ as the fundamental unit of $K$. Notice that $\p^{2r}=(b\sqrt{D}-1)$ tells us $b\sqrt{D}\equiv 1\Mod{\p^{2r}}$. Since $r\geq 2$, we have $2r> r+1>r$.  Therefore,
    \[
    (p^r+b\sqrt{D})^{p-1}\equiv (p^r+1)^{p-1}\equiv 1^{p-1}=1\Mod{\p^{r}}
    \]
    but
    \[
    (p^r+b\sqrt{D})^{p-1}\equiv (p^r+1)^{p-1}\equiv \sum_{i = 0}^{p-1}{p-1 \choose i} p^{ri}\equiv 1+(p-1)p^r \not\equiv 1\Mod{\p^{r+1}}.
    \]  
\end{proof}
\begin{lemma}\label{n1 value}
    Keeping the same setup as before, we have $(b\sqrt{D}+1)^{p-1}\equiv 2^{p-1}\Mod{\p^2}$.  
\end{lemma}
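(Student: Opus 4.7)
The plan is to exploit the factorization that was set up just before the lemma. Recall we have chosen the labeling of primes so that $\p^{2r} = (b\sqrt{D}-1)$ (and $\bar{\p}^{2r} = (b\sqrt{D}+1)$). This immediately gives $b\sqrt{D} - 1 \in \p^{2r}$, i.e.\ $b\sqrt{D} \equiv 1 \Mod{\p^{2r}}$.

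Since the hypothesis $r \geq 2$ forces $2r \geq 4 \geq 2$, the same congruence holds modulo the smaller ideal $\p^2$:
\[
b\sqrt{D} \equiv 1 \Mod{\p^2}.
\]
Adding $1$ to both sides yields $b\sqrt{D} + 1 \equiv 2 \Mod{\p^2}$, and raising to the $(p-1)$-th power gives the desired congruence $(b\sqrt{D}+1)^{p-1} \equiv 2^{p-1}\Mod{\p^2}$.

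Thus the entire argument is a one-line consequence of the explicit factorization of $p^{2r} = (b\sqrt{D}-1)(b\sqrt{D}+1)$ in $\mO_K$ together with the convention fixing which prime above $p$ is $\p$ and which is $\bar{\p}$. There is really no obstacle here; the only thing to be careful about is keeping the labels of $\p$ and $\bar{\p}$ consistent with the setup preceding the lemma, since the analogous statement modulo $\bar{\p}^2$ would instead read $(b\sqrt{D}+1)^{p-1}\equiv 0\Mod{\bar{\p}^2}$. The content of the lemma is precisely that, from the viewpoint of the prime $\p$ where $\varepsilon_D$ and $\alpha=b\sqrt{D}+1$ both live, the element $\alpha$ looks like $2$ modulo $\p^2$, which will be what is used to compute $n_1$ in the subsequent application of Theorem \ref{fukuda and komatsu}.
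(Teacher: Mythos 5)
Your proof is correct and is essentially identical to the paper's: both deduce $b\sqrt{D}\equiv 1\Mod{\p^{2r}}$ from the chosen factorization $\p^{2r}=(b\sqrt{D}-1)$, reduce to modulus $\p^{2}$ using $2r\geq 2$, and conclude $(b\sqrt{D}+1)^{p-1}\equiv 2^{p-1}\Mod{\p^{2}}$. Your added remark about the role of the labeling of $\p$ versus $\bar{\p}$ is accurate but not part of the paper's (two-line) argument.
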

\begin{proof}
    Since we have $b\sqrt{D}\equiv 1\Mod{\p^{2r}}$, 
    \[
    (b\sqrt{D}+1)^{p-1}\equiv(1+1)^{p-1}= 2^{p-1}\Mod{\p^2}.
    \]
\end{proof}
Recall that a prime number $p$ is called a Wieferich prime if $p^2\mid (2^{p-1}-1)$. These primes are thought to be sparse (the only known Wieferich primes are 1093 and 3511 \cite{enwiki:1228062355}).  Nevertheless, it is conjectured that there is an infinite number of Wieferich primes.  We also note that Silverman \cite{MR0961918} has shown the infinitude of non-Wieferich primes under the assumption of the $abc$-conjecture. 
\begin{theorem}\label{main 3}
  Let $p \geq 3$ be a non-Wieferich prime and $r\geq 2$, and write $K=\Q(\sqrt{p^{2r}+1})$.    Assume that $p$ doesn't divide the class number $h_K$. Then the Iwasawa invariants $\mu,\lambda$ 
 are zero for the cyclotomic $\Z_p$-extension of $K$. 
\end{theorem}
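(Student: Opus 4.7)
The plan is to apply the Fukuda--Komatsu criterion of Theorem \ref{fukuda and komatsu}, which requires verifying two conditions: $n_1 = 1$ and $A_0 = D_0$. Once these are established, the criterion gives $|A_n| = |D_0| \cdot p^{n_2-1}$ for all $n \geq n_2 - 1$, so $|A_n|$ is eventually constant in $n$. The Iwasawa growth formula $|A_n| = p^{\mu p^n + \lambda n + \nu}$ then immediately forces $\mu = \lambda = 0$.

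The condition $A_0 = D_0$ follows directly from the hypothesis $p \nmid h_K$, which makes $A_0$ trivial, and hence $D_0 \subseteq A_0$ is trivial as well (so $|D_0| = 1$). For the other invariant, Lemma \ref{n2 value} gives $n_2 = r \geq 2$ away from the finitely many exceptional cases in which $D = 2$, which can be addressed separately.

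The main work is to show $n_1 = 1$. Recall from the discussion preceding Lemma \ref{n2 value} that $\bp^{2r} = (b\sqrt{D} + 1)$, and let $\alpha$ be a generator of $\bp^{h_K}$. Comparing $\bp^{2r h_K} = ((b\sqrt{D}+1)^{h_K}) = (\alpha^{2r})$ as ideals, we obtain $\alpha^{2r} = u \cdot (b\sqrt{D}+1)^{h_K}$ for some unit $u \in \mO_K^\times$. Raising to the $(p-1)$-th power and reducing modulo $\p^2$, the unit factor $u^{p-1}$ is a power of $\varepsilon_D^{p-1}$, which is $\equiv 1 \Mod{p^2}$ by Theorem \ref{general setting}; and by Lemma \ref{n1 value} we get $(b\sqrt{D}+1)^{h_K(p-1)} \equiv 2^{h_K(p-1)} \Mod{\p^2}$. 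Writing $2^{p-1} \equiv 1 + pa \Mod{p^2}$ with $p \nmid a$ (by the non-Wieferich hypothesis), the binomial expansion of $(1 + pa)^{h_K}$ modulo $p^2$ gives
\[
\alpha^{2r(p-1)} \equiv 1 + h_K\, p\, a \Mod{\p^2}.
\]
Since $\gcd(h_K a, p) = 1$ and $\p^2 \cap \Z = p^2 \Z$, this is not congruent to $1$ modulo $\p^2$, and a fortiori $\alpha^{p-1} \not\equiv 1 \Mod{\p^2}$. Combined with $\alpha^{p-1} \equiv 1 \Mod{\p}$ from Fermat's little theorem in $\mO_K/\p \cong \F_p$ (valid since $\alpha$ is coprime to $\p$), we conclude $n_1 = 1$, and the condition $n_1 \leq n_2 = r$ is automatic.

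The principal obstacle is this $n_1 = 1$ step, which requires translating from the abstract generator $\alpha$ of $\bp^{h_K}$ to the explicit generator $b\sqrt{D}+1$ of $\bp^{2r}$ via the relation $\alpha^{2r} = u(b\sqrt{D}+1)^{h_K}$, and then carefully combining both the congruence $\varepsilon_D^{p-1} \equiv 1 \Mod{p^2}$ and the non-Wieferich hypothesis to pin the $\p$-adic depth of $\alpha^{p-1} - 1$ down to exactly $1$. The finitely many exceptional cases with $D = 2$ require separate treatment, since Lemma \ref{n2 value} relies on the explicit form of the fundamental unit that is valid only outside these cases.
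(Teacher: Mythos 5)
Your proof is correct and follows essentially the same route as the paper: both verify the Fukuda--Komatsu conditions $n_1=1$ and $A_0=D_0$, deriving $n_1=1$ from the non-Wieferich hypothesis via Lemma \ref{n1 value} together with $\varepsilon_D^{p-1}\equiv 1\Mod{p^2}$, and getting $A_0=D_0$ trivially from $p\nmid h_K$. The only difference is bookkeeping in the $n_1=1$ step: you compare $\alpha^{2r}$ with $(b\sqrt{D}+1)^{h_K}$ as generators of $\bp^{2rh_K}$ and pass to a contrapositive, while the paper factors both elements through a generator $t$ of $\bp^{s}$ (with $s$ the order of the class of $\bp$) and invokes Lemma \ref{prime to p}; both reductions are valid.
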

\begin{proof}
Let $s$ be the order of the class of $\bar{\p}$ in the class group of $K$ and $\bar{\p}^s = (t)$ for some $t\in K^*$. Since $\bar{\p}^{2r}=(b\sqrt{D}+1)$, then for some integer $k_1$,
\[
b\sqrt{D}+1=\pm t^{2r/s}\varepsilon_K^{k_1}
\]
By Lemma \ref{n2 value}, $\varepsilon_K^{p-1}\equiv 1\Mod{\p^2}$, and by Lemma \ref{n1 value}, $(b\sqrt{D}+1)^{p-1}\equiv 2^{p-1}\not\equiv 1\Mod{\p^2}$. Hence $t^{p-1}\not \equiv 1\Mod{\p^2}$. Let $\alpha$ be a generator of $\bar{\p}^{h_K}$. Then for some integer $k_2$, we have
\[
\alpha =\pm t^{h_K/s}\varepsilon_K^{k_2}
\]
Since $\gcd(p,h_K)=1$, by localizing $K$ at $\p$ we can use Lemma \ref{prime to p} conclude that $\alpha^{p-1}\not \equiv 1\Mod{\p^2}$. Thus, $n_1=1$.

Since $\gcd(p,h_K)=1$, we have $D_0=A_0=0$. Therefore, by Theorem \ref{fukuda and komatsu}, the Iwasawa invariant $\mu=\lambda=0$ for the cyclotomic $\Z_p$-extension of $K$.
\end{proof}
\begin{corollary}
    Fix a non-Wieferich prime $p$.  Then Greenberg's conjecture holds for the members of the following set.
    \[
    \left\{\Q(\sqrt{p^{2r}+1})\,|\,r\geq 2, p \text{ doesn't divide the class number of } \Q(\sqrt{p^{2r}+1}) \right\}
    \]
\end{corollary}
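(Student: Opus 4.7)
The plan is to deduce this corollary as an essentially immediate consequence of Theorem \ref{main 3}, with no additional machinery beyond unpacking the statement of Greenberg's conjecture for the fields in question. First I would observe that each field $K = \Q(\sqrt{p^{2r}+1})$ in the given set is a real quadratic field and hence a totally real number field. Greenberg's conjecture for a totally real number field is exactly the assertion that both Iwasawa invariants $\mu$ and $\lambda$ of the cyclotomic $\Z_p$-extension $K_\infty/K$ vanish, so what must be verified for each such $K$ is $\mu_p(K_\infty/K) = \lambda_p(K_\infty/K) = 0$.

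Next I would check that the three hypotheses of Theorem \ref{main 3} are satisfied for each $K$ belonging to the set. The prime $p$ is a non-Wieferich odd prime with $p \geq 3$ by the fixed choice made at the outset of the corollary; the integer $r$ satisfies $r \geq 2$ by the defining condition of the set; and $p \nmid h_K$ also by the defining condition of the set. Therefore Theorem \ref{main 3} applies and yields $\mu = \lambda = 0$ for $K_\infty/K$, which is precisely Greenberg's conjecture for $K$. Since $K$ was arbitrary in the set, the corollary follows.

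There is essentially no obstacle here: the substantive content is entirely contained in Theorem \ref{main 3}, and the role of this corollary is purely to repackage that theorem as a statement about an (expected to be infinite) family of real quadratic fields on which Greenberg's conjecture holds unconditionally, once one grants the non-Wieferich hypothesis on $p$ and the non-divisibility of the class number. The only care needed is to remark that real quadratic fields are totally real so that Greenberg's conjecture does apply to them.
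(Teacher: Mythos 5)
Your proposal is correct and matches the paper's intent exactly: the paper gives no separate proof of this corollary, treating it as an immediate consequence of Theorem \ref{main 3}, which is precisely the deduction you carry out. Your only added content (noting that real quadratic fields are totally real so Greenberg's conjecture applies) is a harmless and accurate clarification.
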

We expect that this is an infinite family by computational data. However, we don't know how to prove it. As mentioned before, there are also other kinds of numerical criteria \cite{FK}\cite{FK2}\cite{Fukuda1995TheI} in terms of $n_1$ and $n_2$ defined above.  Note that we assume $p$ does not divide the class number of $K$ so that $A_0 = D_0$, and hence we may apply Theorem \ref{fukuda and komatsu}.  We may weaken this condition by just assuming $A_0 = D_0$ or applying other numerical criteria.  However, the authors still do not know if there are infinitely many fields amongst $\Q(\sqrt{p^{2r} + 1})$ for varying $r$ that satisfy this different condition. We believe the following:

\begin{conjecture}
    For a fixed prime $p> 2$, there are infinitely many $r \in \Z^+$ such that $p$ does not divide the class number of $\Q(\sqrt{p^{2r} + 1})$.
\end{conjecture}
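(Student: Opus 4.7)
The plan is to attack the conjecture via the $p$-adic class number formula. Writing $K_r = \Q(\sqrt{D_r})$ with $D_r$ the squarefree part of $p^{2r}+1$, the analytic class number formula together with its $p$-adic refinement (Theorem \ref{Coates} above, or Colmez's $p$-adic analogue) relates $v_p(h_{K_r})$ to $v_p(L_p(1,\chi_{D_r}))$ and $v_p(R_p(K_r))$. The regulator side is sharply controlled: by Lemma \ref{fundamental units}, $\vep_{K_r} = p^r + b\sqrt{D_r}$ for all but finitely many $r$, and since $b\sqrt{D_r} \equiv 1 \pmod{\p^{2r}}$ in the $\p$-adic embedding, one obtains $\vep_{K_r} \equiv 1 + p^r \pmod{\p^{2r}}$ and hence $v_p(\log_p \vep_{K_r}) = r$, sharpening Lemma \ref{n2 value}. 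The discriminant is coprime to $p$, and the Euler factor at $p$ contributes $-2$ to the $p$-adic valuation, so the conjecture reduces to showing that $v_p(L_p(1,\chi_{D_r})) = r - 1$ for infinitely many $r$.

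The first step of the plan is to establish this precise valuation identity in our family from the $p$-adic class number formula. The second step, which is the heart of the matter, is to exhibit infinitely many $r$ for which the target congruence on $L_p(1,\chi_{D_r})$ holds. The natural heuristic here is Cohen--Lenstra: a positive proportion of $r$ should yield $p \nmid h_{K_r}$, with expected density $\prod_{k \geq 2}(1 - p^{-k})$, and the authors' computational evidence is consistent with this prediction.

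The main obstacle, and presumably the reason this is left as a conjecture, is the second step. Proving non-vanishing modulo $p$ of a family of $p$-adic $L$-values is a significant open problem, closely related to basic questions such as the infinitude of real quadratic fields of class number one. Substituting the weaker hypothesis $A_0 = D_0$, or applying the alternative numerical criteria of \cite{FK,FK2,Fukuda1995TheI} in place of Theorem \ref{fukuda and komatsu}, does not appear to bypass this difficulty: one still needs to exclude systematic divisibility by $p$ in a sparse family. A realistic intermediate target is a conditional result assuming the $abc$-conjecture or a Cohen--Lenstra-type statement; a complementary avenue would be to restrict to $r$ lying in explicit arithmetic progressions, combining genus theory on $D_r$ with the observation that $[\bp] \in \operatorname{Cl}(K_r)$ generates a cyclic subgroup of order dividing $2r$, to constrain the $p$-part of $h_{K_r}$.
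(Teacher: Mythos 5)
This statement is presented in the paper as a \emph{conjecture}: the authors offer no proof, and explicitly say they do not know how to establish it. Your proposal, as you yourself acknowledge, also does not prove it. The first step of your plan is sound but essentially tautological: combining Coates's formula (Theorem \ref{Coates}) with the computation already carried out in the proof of Theorem \ref{main1} (the Euler factor contributes $-2$, the root-of-unity factor $+1$, the discriminant $0$) and with Lemma \ref{n2 value} (which gives $v_p(R_p(K_r))=v_p(\log_p\vep_{K_r})=n_2=r$ for all but finitely many $r$), one can indeed convert the condition $p\nmid h_{K_r}$ into an exact statement about $v_p$ of a $p$-adic $L$-value. But this is a change of language, not a reduction: the resulting non-vanishing statement $v_p(L_p(1,\chi_{D_r}))=r-1$ for infinitely many $r$ carries exactly the same content as the conjecture itself, and no tool is offered that could establish it. Cohen--Lenstra is a heuristic, not an argument, and conditioning on $abc$ or on a Cohen--Lenstra-type hypothesis is proposed but not executed.

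So the verdict is that there is a genuine gap, and it sits precisely where you locate it: exhibiting infinitely many $r$ in this sparse family for which the relevant $L$-value (equivalently, the class number) is a $p$-adic unit. Your closing suggestion --- that the class of $\bp$ generates a cyclic subgroup of order dividing $2r$ (which follows from $\bp^{2r}=(b\sqrt{D}+1)$), so that choosing $r$ prime to $p$ forces the subgroup $D_0$ generated by the primes above $p$ to have order prime to $p$ --- is a correct observation and is in the spirit of the authors' remark that one could try to verify the weaker hypothesis $A_0=D_0$ of Theorem \ref{fukuda and komatsu} instead. But it controls only the subgroup $D_0$, not all of $A_0$, so it does not close the gap either. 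In short: the approach is a reasonable framing of why the problem is hard, but it is not a proof, and no proof exists in the paper to compare it against.
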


\section{Appendix}\label{appendix}

Define $G_n,F_n\in \Z$ such that 
\[
(1+\sqrt{2})^n=G_n+F_n\sqrt{2}, \tab n\in \Z.
\]
Comparing $(1+\sqrt{2})^{-n}=(\sqrt{2}-1)^n$ and $(1+\sqrt{2})^n$, we have \[
G_{-n}=(-1)^nG_n, F_{-n}=F_n.
\]
\begin{lemma}
For $l,m \in \Z$, we have the identity
       \begin{equation}\label{E hold}
        G_{l+m}=2G_mG_l-(-1)^mG_{l-m}.
    \end{equation}
\end{lemma}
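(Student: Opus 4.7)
The plan is to use a Binet-style closed form for $G_n$. Set $\alpha = 1+\sqrt{2}$ and $\beta = 1-\sqrt{2}$, the Galois conjugates in $\Q(\sqrt{2})/\Q$, and record the key relation $\alpha\beta = -1$. Applying the nontrivial element of $\Gal(\Q(\sqrt{2})/\Q)$ to the defining equation $\alpha^n = G_n + F_n\sqrt{2}$ gives $\beta^n = G_n - F_n\sqrt{2}$, valid for every $n\in\Z$ since $\alpha,\beta$ are units. Adding the two identities and dividing by two yields the closed form
\[
G_n \;=\; \tfrac{1}{2}\bigl(\alpha^n + \beta^n\bigr), \qquad n\in\Z.
\]

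With this formula in hand, I would substitute directly into the right-hand side of \eqref{E hold}. Expansion gives
\[
2G_m G_l \;=\; \tfrac{1}{2}\bigl(\alpha^{l+m} + \alpha^m\beta^l + \alpha^l\beta^m + \beta^{l+m}\bigr),
\]
while using $(-1)^m = (\alpha\beta)^m$ one computes
\[
(-1)^m G_{l-m} \;=\; \tfrac{1}{2}(\alpha\beta)^m\bigl(\alpha^{l-m} + \beta^{l-m}\bigr) \;=\; \tfrac{1}{2}\bigl(\alpha^l\beta^m + \alpha^m\beta^l\bigr).
\]
Subtracting, the two mixed terms cancel and only $\tfrac{1}{2}(\alpha^{l+m}+\beta^{l+m}) = G_{l+m}$ survives, which is exactly the claimed identity.

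I do not anticipate any serious obstacle: the argument is a routine Lucas-sequence manipulation carried out in the ring $\mO_{\Q(\sqrt{2})} = \Z[\sqrt{2}]$. Because $\alpha$ and $\beta$ are units, the closed form and the computation above are valid simultaneously for all $l,m\in\Z$, so no separate case analysis for negative indices is required, and the identity also immediately specializes to the often-used cases $m=l$ and $m=\pm 1$ without extra work.
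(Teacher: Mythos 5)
Your proof is correct, and it takes a slightly different route from the paper's. The paper works coordinate-wise in $\Z[\sqrt{2}]$: it expands the products $(1+\sqrt{2})^{l+m}=(1+\sqrt{2})^l(1+\sqrt{2})^m$ and $(1+\sqrt{2})^{l-m}=(1+\sqrt{2})^l(1+\sqrt{2})^{-m}$, reads off the rational parts to get $G_{l+m}=G_lG_m+2F_lF_m$ and $G_{l-m}=(-1)^m(G_lG_m-2F_lF_m)$, and adds these to eliminate the $F$-terms. (Incidentally, the paper's displayed equation \eqref{eq 1} contains a typo, $G_lF_m$ for $G_lG_m$; the subsequent combination only works with the corrected form, which your derivation confirms.) You instead symmetrize with the Galois conjugate to get the Binet form $G_n=\tfrac{1}{2}(\alpha^n+\beta^n)$ with $\alpha\beta=-1$ and verify the identity by direct expansion. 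The two arguments encode the same algebra — your mixed terms $\alpha^l\beta^m+\alpha^m\beta^l$ are exactly the paper's $\pm 2F_lF_m$ contribution in disguise — but your version never introduces $F_n$ at all and makes the validity for all $l,m\in\Z$ (negative indices included) manifest from the outset, since $\alpha,\beta$ are units; the paper's version has the minor side benefit of also producing the companion addition formula for $F_{l+m}$, though it is not used. Both are complete proofs.
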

\begin{proof}
    Looking at $(1+\sqrt{2})^{l+m} = (1 + \sqrt{2})^l (1 + \sqrt{2})^m$, we have
    \[
    G_{l+m}+F_{l+m}\sqrt{2}=(G_l+F_l\sqrt{2})(G_m+F_m\sqrt{2})
    \]
    Thus,
    \begin{equation}\label{eq 1}
       G_{l+m}=G_lF_m+2F_lF_m 
    \end{equation} 
On the other hand, since $(1+\sqrt{2})^{l-m} = (1 + \sqrt{2})^l (1 + \sqrt{2})^{-m}$, we have
\[
G_{l-m}+F_{l-m}\sqrt{2}=\frac{G_l+F_l\sqrt{2}}{G_m+F_m\sqrt{2}}=\frac{(G_l+F_l\sqrt{2})(G_m-F_m\sqrt{2})}{(-1)^m}.
\]
Thus,
\begin{equation}\label{eq 2}
  G_{l-m}=(G_lG_m-2F_lF_m)\cdot(-1)^m.
\end{equation}
Combining equations \eqref{eq 1} and \eqref{eq 2}, we get equation \eqref{E hold}.
\end{proof}

\begin{theorem}\label{divisibility}
       Let $l,m\in \N$. If $l,m \in \N$ have the same $2$-adic valuation, then
\[
G_{gcd(l,m)}=gcd(G_l,G_{m})
\]
If the power of 2 dividing $l$ is different from that dividing $m$. Then
\[
gcd(G_l,G_m)=1
\] 
\end{theorem}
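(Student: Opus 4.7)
The plan is to mimic the Euclidean algorithm at the level of the indices by means of an identity trading $\gcd(G_l, G_m)$ for $\gcd(G_{l-2m}, G_m)$, and then to read off the terminal value by tracking $2$-adic valuations. Before starting, I would record two easy facts: $G_n$ is always odd (by induction on the recurrence $G_{n+2}=2G_{n+1}+G_n$ with $G_0=G_1=1$), and if $d\mid n$ with $n/d$ odd then $G_d\mid G_n$. The latter follows from $2G_n = (1+\sqrt{2})^n+(1-\sqrt{2})^n$ and the factorization $x^k+y^k=(x+y)(x^{k-1}-x^{k-2}y+\cdots+y^{k-1})$ for odd $k$, applied in $\Z[\sqrt{2}]$; since $G_n/G_d$ lies in $\Z[\sqrt{2}]\cap\Q=\Z$, the divisibility holds in $\Z$.

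For the key reduction, substituting $l\mapsto l-m$ in the identity $G_{l+m}=2G_lG_m-(-1)^mG_{l-m}$ from equation \eqref{E hold} gives
\[
G_l+(-1)^m G_{l-2m}=2G_{l-m}G_m.
\]
Because $G_m$ is odd this yields $\gcd(G_l,G_m)=\gcd(G_{l-2m},G_m)$, and combined with $G_{-n}=(-1)^n G_n$ one obtains $\gcd(G_l,G_m)=\gcd(G_{r^*},G_m)$, where $r^*\in[0,m]$ is the nearest representative of $\pm l$ modulo $2m$. Iterating this together with the trivial swap $(l,m)\leftrightarrow(m,l)$ produces a Euclidean-like algorithm that preserves both $\gcd(l,m)$ and $\gcd(G_l,G_m)$ and strictly decreases $l+m$ at each nonterminal step, so it terminates after finitely many iterations. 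Termination must occur either at $r^*=0$, giving $\gcd(G_l,G_m)=\gcd(G_0,G_m)=1$, or at $r^*=m$, giving $\gcd(G_l,G_m)=G_m$ with $m=\gcd(l,m)$ by preservation.

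It remains to decide which termination occurs via $2$-adic valuations. If $\nu_2(l)=\nu_2(m)=a$, then $\nu_2(2m)=a+1>a$ forces $\nu_2(r^*)=a$, so this condition persists throughout the algorithm; in particular $r^*=0$ cannot occur (it would require $2m\mid l$ and hence $\nu_2(l)>a$), so the algorithm exits at $r^*=m$, giving $\gcd(G_l,G_m)=G_{\gcd(l,m)}$. If $\nu_2(l)\neq\nu_2(m)$, I would check case by case that this inequality is preserved: when $\nu_2(l)<\nu_2(m)$ one immediately has $\nu_2(r^*)=\nu_2(l)$, while when $\nu_2(l)>\nu_2(m)=b$ a careful computation shows $\nu_2(r^*)\geq b+1$ regardless of $\nu_2(\lfloor l/(2m)\rfloor)$, so after the swap the valuations remain unequal. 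Under this invariant, termination at $r^*=m$ (which demands equal valuations) is impossible, forcing the algorithm to exit at $r^*=0$ with $\gcd(G_l,G_m)=1$. The delicate bookkeeping in the sub-case $\nu_2(l)>\nu_2(m)$, where the valuation of $l-2mk$ depends on an uncontrolled quotient, is the main technical obstacle.
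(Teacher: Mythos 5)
Your proposal is correct and follows essentially the same route as the paper: the identity \eqref{E hold} yields $\gcd(G_l,G_m)=\gcd(G_{l-2m},G_m)$, which drives a Euclidean algorithm on the indices whose outcome (terminal index $0$ versus terminal index $\gcd(l,m)$) is decided by the preserved (in)equality of $2$-adic valuations, exactly as in the paper's argument. The sub-case you flag as delicate is in fact immediate: when $\nu_2(l)>\nu_2(m)=b$, both $l$ and every multiple of $2m$ have $2$-adic valuation at least $b+1$, so $r^*$ is either $0$ or again has valuation exceeding $b$.
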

\begin{proof}
By equation \eqref{E hold}, we have 
\[
G_l=G_{l-r+r}=2G_{l-r}G_r-(-1)^rG_{l-2r}
\]
Hence 
\[
gcd(G_l,G_r)=gcd(G_{l-2r},G_r)
\]
    For a pair $(l,r)\in \N^2$, we will define an operation on the pair to create a new pair $(l_1, r_1)$ in the following way: Assume that $l\geq r$, and define the new pair by
    \[l_1=\max\{|l-2r|,|r|\}\]
    \[
    r_1=\min\{|l-2r|,|r|\}
    \]  
The operation on the pair $(l,r)$ has the following property,
\[gcd(G_l,G_r)= gcd(G_{l_1},G_{r_1})\]
\[
gcd(l,r)=gcd(l_1,r_1)
\]
and
\[
\max\{l,r\}\geq \max\{l_1, r_1\}
\]
Notice that $l,r$ have the same $2$-valuation if and only if $l_1,r_1$ have the same $2$-valuation.
By repeating the operation on the pairs,
\[
(l,r),(l_1, r_1),\cdots,(l_{n-1}, r_{n-1}),(l_n, r_n)
\]
we get a sequence of pairs until 
\begin{equation}\label{max equal}
   \max\{l_{n-1}, r_{n-1}\}=\max\{l_n, r_n\} 
\end{equation}
Assume that $l_n=r_{n-1}$, and $r_n=|l_{n-1}-2r_{n-1}|$. By formula \eqref{max equal}, we have $l_{n-1}=l_n=r_{n-1}$. Then $gcd(l,r)=gcd(l_{n-1},r_{n-1})=l_{n-1}=r_{n-1}$, so $gcd(G_l,G_r)= gcd(G_{l_{n-1}},G_{r_{n-1}})= G_{gcd(l,r)}$. Notice that $l_{n-1}=r_{n-1}$ implies that $l_{n-1},r_{n-1}$ have the same $2$-valuation. This case happens when $l,r$ have the same $2$-adic valuation.  

Assume that $l_n=|l_{n-1}-2r_{n-1}|$, and $r_n=r_{n-1}$. By formula \eqref{max equal}, we have $l_{n-1}=|l_{n-1}-2r_{n-1}|$. If  $l_{n-1}=-l_{n-1}+2r_{n-1}$, then $l_{n-1}=r_{n-1}$. It is the same as the previous case. If $l_{n-1}=l_{n-1}-2r_{n-1}$, then $ r_n=r_{n-1}=0$. We have $gcd(G_l,G_r)= gcd(G_{l_n},G_{r_n})=1$ since $G_0=1$. Notice that $ l_{n-1}=2r_{n-1}$ implies that $l_{n-1}, r_{n-1}$ have different $2$-valuation. The case happens only when $l,r$ have different $2$-valuation. 
\end{proof}
The theorem shows that the sequence $\{G_n\}$ is similar to a strong divisibility sequence (it is a strong divisibility sequence when indices have the same $2$-adic valuation). In \cite{mcconnell2024newinfinitefamiliesnonprational}, McConnell constructs an infinite family of non $p$-rational real quadratic fields based on the sequence $d_l(D)$ which was defined by McConnell as follows:  If $\varepsilon_D$ is the fundamental unit of $K$, let 
\[
u = 
\begin{cases}
    \varepsilon_D^2 &\text{ if }\Nrm(\varepsilon_D) = -1
    \\
    \varepsilon_D &\text{ else }
\end{cases}
\]
Then $d_l(D) = u^l + u^{-l} + 1$.
The sequence $d_l(D)$ is similarly a strong divisibility sequence when indexes have the same $3$-adic valuation.  (see Section 3.2 of \cite{mcconnell2024newinfinitefamiliesnonprational}).

Though we expect that there is no odd prime $p$ such that $G_n=p^r$ for $r\geq 2$, the following lemma is as far as the authors can get.

\begin{lemma}\label{at most one}
   Fix an odd prime $p$. Then there is at most one solution $G_n=p^r$ for some $r\geq 2$. 
\end{lemma}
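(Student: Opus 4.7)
The plan is to argue by contradiction. Suppose $G_{n_1}=p^{r_1}$ and $G_{n_2}=p^{r_2}$ with $n_1<n_2$ and $r_1,r_2\geq 2$; without loss of generality $r_1\leq r_2$. Since $p\mid G_{n_1}$ and $p\mid G_{n_2}$, Theorem~\ref{divisibility} forces $\nu_2(n_1)=\nu_2(n_2)$ and gives
\[
G_{\gcd(n_1,n_2)}=\gcd(G_{n_1},G_{n_2})=p^{r_1}=G_{n_1}.
\]
Strict monotonicity of $\{G_n\}_{n\geq 1}$ (with $G_{n_1}\geq p^2\geq 9>G_1$) yields $\gcd(n_1,n_2)=n_1$, so $n_2=kn_1$ for some integer $k\geq 2$. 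Equality of $2$-adic valuations forces $k$ to be odd, whence $k\geq 3$.

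The heart of the argument is a lifting-the-exponent identity for the companion Lucas sequence $\{G_n\}$: for $p$ odd, odd $k$, and $n$ with $v_p(G_n)\geq 2$,
\[
v_p(G_{kn})=v_p(G_n)+v_p(k).
\]
I would first establish the congruence $G_{kn}/G_n\equiv\pm k\Mod{G_n}$ for odd $k$ by expanding $(G_n+F_n\sqrt{2})^{k}$ via the binomial theorem, extracting the rational part, and using $G_n^2-2F_n^2=(-1)^n$ to eliminate $F_n$ modulo $G_n$. A standard induction on $v_p(k)$ then yields the LTE, with the inductive step reducing to the case $k=p$, where $G_{pm}/G_m\equiv\pm p\Mod{G_m}$ together with $v_p(\pm p)=1<v_p(G_m)$ forces $v_p(G_{pm}/G_m)=1$. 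Applying the LTE to $n=n_1$ gives $r_2-r_1=v_p(k)$, and hence
\[
Q_k:=\frac{G_{kn_1}}{G_{n_1}}=p^{v_p(k)}\leq k.
\]

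For the complementary size estimate, the closed form $G_n=(\alpha^n+\bar\alpha^n)/2$ with $\alpha=1+\sqrt{2}$ and $|\bar\alpha|<1$ gives
\[
\frac{G_{kn_1}}{G_{n_1}}>\frac{\alpha^{kn_1}-1}{\alpha^{n_1}+1},
\]
and the right side exceeds $k$ for all $k\geq 3$ odd and $n_1\geq 1$ by a routine exponential-versus-linear comparison (inducting on $k$ in steps of $2$, starting from $\alpha^3-3\alpha-4=2\sqrt{2}>0$). Combining with the previous step yields $k<Q_k\leq k$, the desired contradiction, so $n_1=n_2$.

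The main obstacle is the LTE identity of the middle step. The congruence $G_{kn}/G_n\equiv\pm k\Mod{G_n}$ is a short binomial calculation, but the hypothesis $r_1\geq 2$ is essential to the induction: if instead $v_p(G_m)=1$, the congruence $G_{pm}/G_m\equiv\pm p\Mod{G_m}$ only yields $v_p(G_{pm}/G_m)\geq 1$ without determining the exact value, and the induction collapses. Once the LTE is in hand, the remaining steps are elementary.
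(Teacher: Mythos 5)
Your argument is correct, but it takes a genuinely different route from the paper's. The paper's proof is three lines: it first observes (via the argument inside the proof of Lemma~\ref{fundamental units}) that any solution of $G_n=p^r$ with $r\geq 2$ forces the index $n$ to be an odd prime, so two distinct solutions would have coprime odd indices $q_1\neq q_2$, and Theorem~\ref{divisibility} then gives $p^{\min\{r_1,r_2\}}=\gcd(G_{q_1},G_{q_2})=G_{\gcd(q_1,q_2)}=G_1=1$, a contradiction. You never use the prime-index reduction; instead you use Theorem~\ref{divisibility} only to get $n_1\mid n_2$ with odd quotient $k\geq 3$, and then close the argument with a lifting-the-exponent identity for the companion sequence $V_n=2G_n$ together with the growth estimate $G_{kn_1}/G_{n_1}>k$. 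Your LTE step is only sketched, but the sketch is sound: expanding $(G_n+F_n\sqrt{2})^k$ for odd $k$ and using $2F_n^2\equiv(-1)^{n+1}\Mod{G_n}$ gives $G_{kn}\equiv \pm kG_n\Mod{G_n^2}$, and the induction on $\nu_p(k)$ goes through precisely because $r_1\geq 2$, as you correctly flag. What each approach buys: the paper's proof is essentially free given the machinery already developed for Lemma~\ref{fundamental units}, while yours is self-contained relative to Theorem~\ref{divisibility} (it does not need the ``$n$ must be prime'' fact) and in fact establishes the stronger quantitative statement $\nu_p(G_{kn})=\nu_p(G_n)+\nu_p(k)$, at the cost of considerably more work.
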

\begin{proof}
    By the argument of the proof for Lemma \ref{fundamental units}, if it has a solution then $n=q$ has to be an odd prime. Assume that there are two solutions $G_{q_1}=p^{r_1}$ and $G_{q_2}=p^{r_2}$ and $q_1,q_2$ are prime number. Then by Theorem \ref{divisibility},
    \[
    1=G_1=G_{gcd(q_1,q_2)}=gcd(p^{r_1},p^{r_2})=p^{\min\{r_1,r_2\}}\neq 1
    \]
    which is a contradiction. 
\end{proof}

\printbibliography

\end{document}